\apptocmd{\sloppy}{\hbadness 10000\relax}{}{}
\newtheorem{theorem}{Theorem}[section]
\newtheorem{lemma}[theorem]{Lemma}
\newtheorem{proposition}[theorem]{Proposition}
\newtheorem{corollary}[theorem]{Corollary}
\newtheorem{problem}[theorem]{Problem}
\theoremstyle{definition}
\newtheorem{definition}[theorem]{Definition}
\newtheorem{examples}[theorem]{Examples}
\theoremstyle{remark}
\newtheorem{remark}[theorem]{Remark}
\newcommand{\NN}{\mathbb{N}}
\newcommand{\RR}{\mathbb{R}}
\newcommand{\CC}{\mathbb{C}}
\newcommand{\holo}{\mathcal{O}}
\newcommand{\reg}{\mathcal{O}_{\text{alg}}}
\newcommand{\id}{\mathop{\mathrm{id}}}
\newcommand{\shvecalg}{\mathop{\mathrm{LND}}}
\newcommand{\aut}{\mathop{\mathrm{Aut}}}
\newcommand{\saut}{\mathop{\mathrm{SAut}}}
\newcommand{\authol}{\mathop{\mathrm{Aut}_\mathrm{hol}}}
\newcommand{\autid}{\mathop{\mathrm{Aut}_\mathrm{1}}}
\newcommand{\slgrp}{\mathrm{SL}}
\newcommand{\glgrp}{\mathrm{GL}}
\title{Algebraic Overshear Density Property}
\author{Rafael B. Andrist}
\address{Rafael B. Andrist \\ Department of Mathematics \\
American University of Beirut \\
Beirut, Lebanon. \\ And: Faculty of Mathematics and Physics \\ University of Ljubljana \\ Ljubljana, Slovenia}
\author{Frank Kutzschebauch}
\address{Frank Kutzschebauch \\ Mathematical Institute, University of Bern, Bern, Switzerland}
\keywords{shear, overshear, locally nilpotent derivation, density property, flexibility, holomorphic automorphism, bordered Riemann surface, embedding of Riemann surfaces}
\subjclass[2010]{Primary 32M17, 14R10, Secondary 32Q56}
\begin{document}

\begin{abstract}
We introduce the notion of the algebraic overshear density property which implies both the algebraic notion of flexibility and the holomorphic notion of the density property. We  investigate basic consequences of this stronger property, and propose further research directions in this borderland between affine algebraic geometry and elliptic  holomorphic  geometry.

As an application, we show that any smoothly bordered Riemann surface with finitely many boundary components that is embedded in a complex affine surface with the algebraic overshear density property admits a proper holomorphic embedding.
\end{abstract}

\maketitle

\section{Introduction}
\subsection{History}
Starting with the seminal work of Anders\'en and Lempert \cite{AL} on $\CC^n, \, n \geq 2,$ the study of complex manifolds with
infinite-dimensional holomorphic automorphism groups has been an extremely active area in several complex variables. At the same time, the study of such highly symmetric objects in affine algebraic geometry has been very active as well. In fact, the study of the
algebraic automorphism group of $\CC^n$ has started much earlier than in several complex variables. The starting point of the complex analytic investigations has been motivated by results and questions from the algebraic case. For example, Rosay--Rudin asked \cite{RRmaps}*{Open Question 6, p.~79} in 1988 whether the group of (volume-preserving) holomorphic automorphisms of $\CC^n$ was generated by shears in coordinate directions, i.e., maps of the form
\begin{equation}
\label{shear}
 (z_1, \ldots, z_n) \mapsto (z_1, \ldots , z_{n-1}, z_n + f (z_1, \ldots , z_{n-1})), 
\end{equation}
where $f \in \holo(\CC^{n-1})$ is an arbitrary polynomial or holomorphic function of $n-1$ variables. They can be viewed as time-$1$ maps of the vector field $\theta = f(z_1, \ldots, z_{n-1}) \frac{\partial}{\partial z_n} $.
In complex analysis (when $f$ is holomorphic) such an automorphism is called a shear and such a vector field  is called a shear field. If $f$ is a 
polynomial, the complex analysts call the automorphism a polynomial shear, whereas in affine algebraic geometry it is called an elementary automorphism.  

Overshears in coordinate directions are maps of the form
\begin{equation}
\label{overshear}
(z_1, \ldots, z_n) \mapsto (z_1, \ldots , z_{n-1},  f (z_1, \ldots , z_{n-1}) \cdot z_n), 
\end{equation}
where $f \in \holo^\ast(\CC^{n-1})$ is a nowhere vanishing holomorphic function. By simple connectedness of $\CC^{n-1}$,
the function $f$ is the exponential  $f = e^g$ of some holomorphic $g \in \holo(\CC^{n-1})$. Again, such an automorphism is the 
time-1 map of a complete(ly integrable) holomorphic vector field $\theta = g(z_1, \ldots, z_{n-1}) z_n \frac {\partial} {\partial z_n}$.

The problem in affine algebraic geometry that corresponds to the question of Rosay--Rudin, is the \emph{tame generator conjecture}, asking whether polynomial maps of the form \eqref{shear} together with affine automorphisms (the group generated by them is called the tame subgroup)
generate the algebraic automorphism group of $\CC^n$. This is classically known to be true for $n=2$, much later it has been shown by Umirbaev and Shestakov \cite{US} to be wrong  for $n=3$, and it is still open for $n>3$.
%The result of Umirbaev and Shestakov has led to a change of point of view in affine algebraic geometry.
The notion of tame subgroup which is very much coordinate dependent must be replaced by the group $\saut{\CC^n}$, generated by the flow maps (respectively the corresponding one-parameter subgroups) of locally nilpotent derivations, for short LNDs. The notion of a locally nilpotent derivation is coordinate-independent and makes sense on any affine algebraic variety, see Definition \ref{def-lnd}. The polynomial shears in Equation \eqref{shear} are examples of LNDs in $\CC^n$. Understanding the importance of LNDs in affine algebraic geometry, a group of mathematicians introduced the notion of flexibility which proved extremely useful. The main result of their paper \cite{A-Z}*{Theorem 0.1} states the equivalence of the following three properties for an affine-algebraic manifold $X$:
\begin{enumerate}
\item $X$ is flexible, i.e., the LNDs span the tangent space in each point.
\item The group $\saut(X)$ acts transitively.
\item The group $\saut(X)$ acts infinitely transitively, i.e., $m$-transitive for any $m \in \NN$.
\end{enumerate}
We denote the class of flexible manifolds by $\mathrm{FLEX}$.

Concerning the above-mentioned question of Rosay--Rudin, the answer given by Anders\'en and Lempert is simply no, the group generated by shears and overshears in $\CC^n$ is meagre in the holomorphic automorphism group $\authol{\CC^n}$ (even for $n=2$). 
However, the main result in their paper, the first version of the now so-called Anders\'en--Lempert Theorem, implies that the group generated by shears and overshears in $\CC^n$ is dense in compact-open topology in the holomorphic automorphism group $\authol{\CC^n}$. The Anders\'en--Lempert Theorem, which in current form has been proved by Forstneri\v{c} and Rosay in \cite{ForRos-AL}, led to a number of remarkable geometric constructions in $\CC^n$. We refer the reader to the textbook of Forstneri\v{c} \cite{Forstneric-book}
and to the overview articles of Kaliman--Kutzschebauch \cite{KaKuPresent} and Kutzschebauch \cite{flexi} for an account on this subject. As an interesting example, let us just name the existence of proper holomorphic embeddings of $\varphi \colon \CC^k \hookrightarrow \CC^n$ which are not straightenable, i.e., for no
holomorphic automorphism $\alpha$ of $\CC^n$ its image $\alpha (\varphi (\CC^k))$ is equal to the first $k$-coordinate plane $\CC^k \times \{0\}$. This in turn led to the negative solution of the holomorphic linearization problem. One can construct reductive subgroups of 
$\authol{\CC^n}$, which are not conjugated to a subgroup of linear 
transformations (for details see \cite{DK1}).

The idea behind the Anders\'en--Lempert Theorem was generalized by Varolin to complex manifolds other than $\CC^n$ \cite{density}. He introduced the notion of the density property, see Definition \ref{def-dens}. This is a precise way of saying that the holomorphic automorphism group of a Stein manifold is large.

Methods from algebraic geometry turned out to be very fruitful in the search for manifolds with density property. Already Varolin had introduced the notion of algebraic density property for an affine algebraic manifold, which implies the density property.
However, the algebraic density property is merely a tool for proving the density property, it does, for example, not imply flexibility since it is not using LNDs. For more details on flexibility see \cite{flexi}.

Some of the geometric constructions done in $\CC^n$ with the help of the Anders\'en--Lempert Theorem could be generalized to Stein manifolds with the density property. However, there are constructions which still rely on the coordinates in $\CC^n$. For example, the great embedding results of Riemann surfaces into $\CC^2$ originating in the Ph.D.\ thesis of Wold which can be stated briefly as follows: If a bordered Riemann surface admits a non-proper holomorphic embedding into $\CC^2$, then it also has a proper holomorphic embedding into $\CC^2$ \cite{borderedproper}*{Corollary 1.2}.
The geometric idea behind these results is to push the boundary of the bordered Riemann surface to infinity using a sequence of holomorphic automorphisms. To construct those automorphisms, a combination of the Anders\'en--Lempert Theorem with an explicitly given
shear automorphism is used. The method of ``precomposition with a shear'' goes back to Forstneri\v{c} and Buzzard \cite{BF}*{p.~161}, and has been formalized in the notion of \emph{nice projection property} in \cite{Kutzschebauch-Low-Wold}*{Definition 2.1}. 
All this research described above is part of a newly emerged area of elliptic holomorphic geometry, which also comprises Oka theory, the theory around the Oka--Grauert--Gromov homotopy principle. Stein manifolds with the density property are elliptic in the sense of Gromov and thus they are Oka manifolds. We refer to the textbook of Forstneri\v{c} \cite{Forstneric-book}*{Proposition 5.6.23} for details, see also \cite{flexi}.

\subsection{The new notion}
The aim of this paper is to introduce a new notion of largeness of the holomorphic automorphism group of an affine algebraic manifold. We call it \emph{algebraic overshear density property}. It is stronger than the algebraic density property, in fact it implies both the density property and the notion of flexibility introduced by Arzhantsev et al.\ \cite{A-Z}*{p.~768}. We are confident that this is the correct notion to generalize the geometric constructions known for $\CC^n$ using both the ``nice projection property'' and Anders{\'e}n--Lempert Theory to an affine-algebraic manifold $X$.
Moreover, our notion fits in the modern point of view of affine algebraic geometry concentrating on LNDs and the group $\saut(X)$ generated by their flows. In addition to the powerful Anders\'en--Lempert Theorem we have the theory of locally nilpotent derivations to our disposal, for example, the existence of a quotient  $\pi \colon X \to X//G_a$  which often is affine (e.g. see section \ref{emb})  and is a geometric quotient when restricted to a Zariski open subset \cite{partquot}*{Theorem 4.4}.  This  can be used to replace the use of shears in the ``precomposition with a shear'' trick. 

The paper grew out of a discussion at the conference \emph{Frontiers in Elliptic Holomorphic Geometry} held in Jevnaker which brought together researchers from
affine algebraic geometry and  elliptic holomorphic  geometry. The idea to introduce a new notion closing the gap between different versions of flexibility in the two areas is due to Finnur L\'arusson to whom we express our sincere gratitude. We also thank him for the
careful reading a first manuscript and proposing Problem \ref{problemFLEXAD}.

The paper is organized as follows. In Section \ref{defin}, we recall the 
definitions and define our new property. In the subsequent section we give some geometric consequences of the algebraic overshear density property and a criterion for it. In Section \ref{examples}, we go through the list of affine algebraic manifolds known to have the algebraic density property, and explain which of them also have the algebraic overshear density property. This shows that our new property is strictly stronger than the algebraic density property. In the last section, we propose some open problems.
These mainly concern the geometric constructions which have been developed in $\CC^n$ but not easily generalize to Stein manifolds with the density property.

\section{Definitions}\label{defin}

\begin{definition}
Let $X$ be a complex manifold and let $\Theta$ be a $\CC$-complete holomorphic vector field on $X$. Let $f \in \holo(X)$ such that
\begin{enumerate}
\item $\Theta(f) = 0$, then $f \cdot \Theta$ is called a \emph{shear of $\Theta$} or a \emph{$\Theta$-shear vector field}.
\item $\Theta^2(f) = 0$, then $f \cdot \Theta$ is called an \emph{overshear of $\Theta$} or a \emph{$\Theta$-overshear vector field}.
\end{enumerate}
\end{definition}

\begin{lemma}
Let $X$ be a complex manifold and let $\Theta$ be a $\CC$-complete holomorphic vector field on $X$. Then the shears and overshears of $\Theta$ are also $\CC$-complete holomorphic vector fields on $X$.
\end{lemma}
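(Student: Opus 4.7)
The plan is to use the fact that the defining conditions $\Theta(f) = 0$ and $\Theta^2(f) = 0$ are exactly statements about how $f$ varies along orbits of $\Theta$, and then explicitly solve for trajectories of $f \cdot \Theta$ as reparametrizations of orbits of $\Theta$.

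Let $\{\varphi_t\}_{t \in \CC}$ denote the (global) holomorphic flow of $\Theta$. In the shear case, $\Theta(f) = 0$ means that $f$ is a first integral of $\Theta$, so $f \circ \varphi_t = f$ for every $t \in \CC$. I would then just guess the flow of $f \cdot \Theta$: set
\[
 \psi_t(x) := \varphi_{t \cdot f(x)}(x)
\]
and verify, using $f(\psi_t(x)) = f(x)$, that $\frac{d}{dt}\psi_t(x) = f(x)\,\Theta(\varphi_{tf(x)}(x)) = f(\psi_t(x))\,\Theta(\psi_t(x))$, i.e.\ $\psi_t$ is a one-parameter group defined for all complex times $t$, so $f \cdot \Theta$ is $\CC$-complete.

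For the overshear case, the condition $\Theta^2(f) = 0$ says that $\Theta(f)$ is itself a first integral of $\Theta$. Integrating along an orbit gives
\[
 f(\varphi_s(x)) = f(x) + s \cdot \Theta(f)(x) \qquad (s \in \CC).
\]
I would look for the integral curve of $f\cdot\Theta$ starting at $x_0$ in the form $t \mapsto \varphi_{s(t)}(x_0)$ with $s(0) = 0$. Then
\[
 \frac{d}{dt}\varphi_{s(t)}(x_0) = \dot s(t)\,\Theta(\varphi_{s(t)}(x_0)),
\]
so the ansatz works precisely when $\dot s(t) = f(\varphi_{s(t)}(x_0)) = a + b\,s(t)$, with the complex constants $a := f(x_0)$ and $b := \Theta(f)(x_0)$. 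This is a scalar affine-linear ODE with constant coefficients, whose unique solution
\[
 s(t) = \begin{cases} \dfrac{a}{b}\bigl(e^{bt} - 1\bigr), & b \neq 0, \\[4pt] a\,t, & b = 0, \end{cases}
\]
is entire in $t \in \CC$. Hence $\psi_t(x_0) := \varphi_{s(t)}(x_0)$ defines the flow of $f\cdot\Theta$ for all complex $t$ and all $x_0 \in X$, proving $\CC$-completeness.

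The only nontrivial step is recognizing the overshear trajectory as a time-reparametrization of a $\Theta$-orbit, so that the intrinsic ODE on $X$ collapses to a one-dimensional linear ODE; after that, joint holomorphic dependence of $(t,x) \mapsto \psi_t(x)$ follows from joint holomorphy of $\varphi$ and of $s(t)$ in both $t$ and the parameters $a = f(x), b = \Theta(f)(x)$, so no further obstacle remains.
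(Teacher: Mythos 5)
Your proof is correct. The paper itself does not prove the lemma at all --- it only cites Varolin's abstract argument and the explicit flow formula in Andrist--Kutzschebauch --- so your self-contained derivation is a welcome reconstruction of exactly that ``explicit formula'' approach: the flow of the overshear $f\cdot\Theta$ is $\psi_t(x)=\varphi_{s(t,x)}(x)$ with $s(t,x)=\frac{f(x)}{\Theta(f)(x)}\bigl(e^{t\,\Theta(f)(x)}-1\bigr)$ (read as $t f(x)$ when $\Theta(f)(x)=0$), which is precisely what your scalar ODE produces, with the shear case as the special instance $b=0$. Two small points you could make explicit but which do not affect correctness: the ansatz $\dot s = f(\varphi_{s(t)}(x_0))$ is only a \emph{sufficient} condition for $\varphi_{s(t)}(x_0)$ to be an integral curve of $f\cdot\Theta$ (which is all you need), and the one-parameter group law $\psi_{t+u}=\psi_t\circ\psi_u$ follows from uniqueness of integral curves together with the invariance of $\Theta(f)$ along $\Theta$-orbits; exhibiting an entire integral curve through every point already gives $\CC$-completeness.
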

\begin{proof}
See  \cite{shears}*{Proposition 3.2} for an abstract proof or see  \cite{fibred}*{Lemma 3.3} for an explicit formula of the flow maps.
\end{proof}
\begin{definition}
\label{def-lnd}
Let $X$ be a complex affine variety. A \emph{locally nilpotent derivation} (LND) on $X$ is $\CC$-linear derivation $D \colon \CC[X] \to \CC[X]$ such that for each $f \in \CC[X]$ there exists a number $n \in \NN$ such that $D^n(f) = 0$.

We denote set of the \emph{algebraic shear vector fields} of $X$ by $\shvecalg(X)$. These are precisely the LNDs on $X$.
\end{definition}

\begin{definition}
Let $X$ be a smooth complex affine variety (we call such a variety below a complex affine manifold).
\begin{enumerate}
\item The set of \emph{holomorphic shear vector fields} of $X$ consist of all holomorphic vector fields that are holomorphic shears of vector fields in $\shvecalg(X)$.
\item The set of \emph{algebraic overshear vector fields} of $X$ consist of all algebraic vector fields that are algebraic overshears of vector fields in $\shvecalg(X)$.
\item The set of \emph{holomorphic overshear vector fields} of $X$ consist of all holomorphic vector fields that are holomorphic overshears of vector fields in $\shvecalg(X)$.
\end{enumerate}
\end{definition}

\begin{definition}
Let $X$ be a complex affine manifold.
We say that $X$ enjoys the \emph{algebraic overshear density property} if the Lie algebra generated by the algebraic overshear vector fields coincides with the Lie algebra of all algebraic vector fields.
We denote the class of these manifolds by $\mathrm{AOSD}$.
\end{definition}

As a comparison, we give also the original definition of the (algebraic) density property introduced by Varolin \cite{density}*{Section 3}.

\begin{definition}
\label{def-dens} \hfill
\begin{enumerate}
\item Let $X$ be a complex affine manifold.
We say that $X$ enjoys the \emph{algebraic density property} if the Lie algebra generated by the complete algebraic vector fields coincides with the Lie algebra of all algebraic vector fields.
\item Let $X$ be a complex manifold.
We say that $X$ enjoys the \emph{density property} if the Lie algebra generated by the complete holomorphic vector fields is dense (w.r.t.\ the topology of locally uniform convergence) in the Lie algebra of all holomorphic vector fields.
\end{enumerate}
\end{definition}

%\begin{definition}
%Let $X$ be a complex affine manifold.
%\begin{enumerate}
%\item The \emph{algebraic shear group}, denoted by $\shgrpalg(X)$, is the group generated by flows of algebraic shear vector fields on $X$.
%\item The \emph{algebraic overshear group}, denoted by $\oshgrpalg(X)$, is the group generated by flows of algebraic overshear vector fields on $X$.
%\end{enumerate}
%\end{definition}

\section{Geometric consequences and a criterion}
\begin{proposition}
Let $X$ be a complex affine manifold. If $X$ has the algebraic overshear density property, then it is flexible.
\label{aosd-flex}
\end{proposition}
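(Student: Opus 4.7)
\emph{Proof plan.} My approach is to appeal to the equivalence of flexibility with transitivity of $\saut(X)$ from the main theorem of Arzhantsev et al.\ \cite{A-Z}, and to establish the latter by comparing the orbits of the pseudogroup generated by overshear flows with those of $\saut(X)$ itself.

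The crucial local observation is that any algebraic overshear $\mu = f\Theta$ is pointwise a scalar multiple of the LND $\Theta$, so its integral curves are reparametrisations of those of $\Theta$. Writing $\psi_t = \exp(t\Theta)$ for the algebraic one-parameter subgroup of $\saut(X)$ generated by $\Theta$, the flow $\phi_s^\mu$ through $x$ will take the form $\phi_s^\mu(x) = \psi_{T(s;x)}(x)$, with $T(\cdot;x)$ solving $\dot T = f(\psi_T(x))$. Since $\Theta^2(f)=0$, the Taylor series of $f\circ\psi_t$ in $t$ truncates after the linear term, so this becomes the \emph{linear} ODE $\dot T = f(x) + \Theta(f)(x)\cdot T$, which is globally solvable for $s\in\CC$. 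Hence $\phi_s^\mu(x) \in \saut(X)\cdot x$ for every $s\in\CC$, and iterating over finite compositions of overshear flows, the entire orbit of $x$ under the overshear pseudogroup is contained in $\saut(X)\cdot x$.

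On the other hand, AOSD guarantees that the Lie algebra generated by overshears equals the space of all algebraic vector fields, which spans $T_x X$ at every point of the smooth affine variety $X$. The holomorphic version of the Stefan--Sussmann orbit theorem then yields that the overshear pseudogroup orbit through $x$ is open in $X$. Combined with the previous paragraph, $\saut(X)\cdot x$ contains an open neighbourhood of $x$ for every $x$; connectedness of $X$ therefore forces $\saut(X)\cdot x = X$. Thus $\saut(X)$ acts transitively on $X$, and flexibility follows from the equivalence $(1)\Leftrightarrow(2)$ in \cite{A-Z}.

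The main obstacle I anticipate is the reparametrisation step: one has to verify that the condition $\Theta^2(f)=0$ is exactly what makes the reparametrisation ODE linear with constant coefficients (and hence globally solvable on all of $\CC$), so that the overshear flow $\phi_s^\mu(x)$ genuinely stays on the algebraic $\Theta$-orbit of $x$ for every complex time $s$ rather than escaping it. The remaining ingredients are then standard.
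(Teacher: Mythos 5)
Your argument is correct, but it takes a genuinely different route from the paper's. You establish transitivity of $\saut(X)$ and then invoke the equivalence of transitivity and flexibility from \cite{A-Z}, whereas the paper verifies the definition of flexibility directly at the level of tangent vectors: it expands Lie brackets of overshears via $[f\Theta,\tilde f\tilde\Theta]=f\Theta(\tilde f)\,\tilde\Theta-\tilde f\tilde\Theta(f)\,\Theta+f\tilde f\,[\Theta,\tilde\Theta]$, notes that evaluated at a point this is a linear combination of LNDs and of brackets of LNDs, approximates a bracket $[\Theta,\tilde\Theta]$ by the difference quotient $t^{-1}\bigl((\varphi_{-t})_*\tilde\Theta-\tilde\Theta\bigr)$, which is a combination of LNDs because the pushforward of an LND under the algebraic flow $\varphi_t$ is again an LND, and finally uses that spanning the tangent space is an open (and here algebraic) condition. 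Your key computation --- that $\Theta^2(f)=0$ forces $f\circ\psi_t$ to be affine in $t$, so the reparametrisation ODE becomes $\dot T=f(x)+\Theta(f)(x)\,T$ with an entire solution in $s$, and hence the overshear flow never leaves the $\Theta$-orbit --- is exactly the explicit flow formula behind the paper's completeness lemma (cf.\ \cite{fibred}); it degenerates harmlessly at zeros of $\Theta$, where the overshear also vanishes. What your route buys is a transparent global picture (overshear flows cannot enlarge $\saut(X)$-orbits) and the avoidance of the bracket-approximation step; what it costs is reliance on the nontrivial implication ``transitive $\Rightarrow$ flexible'' of \cite{A-Z} (stated there for irreducible varieties of dimension at least $2$, though its orbit-theorem proof, via pushforwards of LNDs by elements of $\saut(X)$, applies here too) and on a holomorphic orbit theorem, both of which the paper's self-contained infinitesimal argument sidesteps.
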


\begin{proof}
Let $x_0 \in X$ be a point. We will now find finitely many LNDs on $X$ that span the tangent space in $x_0$.

Since $X$ is affine, we can span the tangent space in $x_0$ by finitely many algebraic vector fields $\Theta_1, \dots, \Theta_n$ where $n = \dim X$.

The algebraic overshear density property implies that
for each $j = 1, \dots, n$ there exist finitely many LNDs $\Theta_{j,1}, \dots, \Theta_{j, m(j)}$ and regular functions $f_{j,1} \in \ker \Theta_{j,1}^2, \dots, f_{j,m(j)} \in  \Theta_{j, m(j)}^2$ such that each
$\Theta_j$ is a Lie combination of $f_{j,k} \cdot \Theta_{j,k}$ where $k = 1, \dots, m(j)$.

We make the following observation:
Let $f \cdot \Theta$ resp. $\widetilde{f} \cdot \widetilde{\Theta}$ be overshears of LNDs $\Theta$ resp.\ $\widetilde{\Theta}$. Then we obtain for their Lie bracket:
\begin{equation*}
\left[ f \cdot \Theta, \widetilde{f} \cdot \widetilde{\Theta}  \right] = f \Theta(\widetilde{f}) \cdot \widetilde{\Theta} - \widetilde{f} \widetilde{\Theta}(f) \cdot \Theta +  f \widetilde{f} \left[ \Theta, \widetilde{\Theta} \right]
\end{equation*}
Hence, we see that each vector $\Theta_j | x_0$ is a complex linear combination of the LNDs $\Theta_{j,k}$ and their Lie brackets, evaluated in $x_0$.
Moreover, a Lie bracket of two LNDs $\Theta$ and $\widetilde{\Theta}$ can be expressed using the flow map $\varphi_t$ of $\Theta$:
\begin{equation*}
\left[ \Theta, \widetilde{\Theta} \right]_{x} = \lim_{t \to 0} \frac{ \mathop{d \varphi_{-t}} \widetilde{\Theta}_{\textstyle \varphi_t(x)} - \widetilde{\Theta}_{\textstyle x}}{t}
\end{equation*}
Since $\Theta$ is an LND, its flow $\varphi_t$ is algebraic, and we can approximate $\left[ \Theta, \widetilde{\Theta} \right]$ arbitrarily well by the sum of two LNDs. Applying these arguments inductively, we see that each vector $\Theta_j | x_0$ can be approximated arbitrarily well by a complex linear combination of LNDs. Since spanning the tangent space is an open condition, we conclude that finitely many LNDs span the tangent spaces in an open neighborhood of $x_0$.

The set where these LNDs do not span the tangent spaces, must be an algebraic subvariety. Hence, we obtain the desired conclusion by a standard argument.
\end{proof}

We will need the following notion of a \emph{compatible pair} which was introduced by Kaliman and Kutzschebauch \cite{densitycriteria}*{Definition 2.5}:

\begin{definition}
\label{def-pair}
Let $X$ be an affine algebraic manifold $X$. A \emph{compatible pair of LNDs} are $\Theta, \, \Xi \in \shvecalg(X)$ such that
\begin{enumerate}
\item $\exists \, h \in (\ker \Theta^2 \setminus \ker \Theta) \cap \ker \Xi$
\item $\mathop{\mathrm{span}}_\CC \{ \ker \Theta \cdot \ker \Xi \}$ contains a non-trivial $\reg(X)$-ideal
\end{enumerate}
\end{definition}

\begin{proposition}\label{comppairs}
Let $X$ be an affine algebraic manifold that is flexible and a admits a compatible pair of LNDs. Then $X$ has the algebraic overshear density property.
\end{proposition}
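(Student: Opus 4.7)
The plan is to adapt the proof of the Kaliman--Kutzschebauch compatible-pair criterion \cite{densitycriteria} for the algebraic density property while keeping track of the fact that every vector field produced along the way is an overshear of an LND. Let $(\Theta, \Xi)$ be the compatible pair with $h \in \ker \Theta^2 \cap \ker \Xi$, and denote by $\mathfrak{L}$ the Lie algebra generated by the algebraic overshear vector fields. I would start by collecting the basic ingredients lying in $\mathfrak{L}$: the LNDs $\Theta$ and $\Xi$ themselves (they are shears with constant coefficient $1$); the overshear $h \Theta$, since $\Theta^2(h) = 0$; and the shear $h \Xi$, since $\Xi(h) = 0$. For any $p \in \ker \Theta$ and $q \in \ker \Xi$ the shears $p \Theta$ and $q \Xi$ are also in $\mathfrak{L}$, and a direct check shows $\Theta(ph) = p\Theta(h)$, $\Theta^2(ph) = 0$, so that $ph\Theta$ is again an overshear of $\Theta$.

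The next step is the commutator computation
\begin{equation*}
[h \Theta, \Xi] = h[\Theta, \Xi] - \Xi(h)\Theta = h[\Theta, \Xi],
\qquad
[\Theta, h \Xi] = h[\Theta, \Xi] + \Theta(h)\,\Xi,
\end{equation*}
which places both $h[\Theta, \Xi]$ and $\Theta(h)\,\Xi$ into $\mathfrak{L}$. I would then iterate using the general identity
\begin{equation*}
[f \Theta, g \Xi] = f\Theta(g)\,\Xi - g\Xi(f)\,\Theta + fg[\Theta, \Xi]
\end{equation*}
with the overshears $ph\Theta$ and $q\Xi$ for $p \in \ker\Theta$, $q \in \ker\Xi$, to extract vector fields of the form $f \cdot \Xi$ with $f$ ranging over $\mathrm{span}_\CC\{\ker\Theta \cdot \ker\Xi\}$. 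Combined with part~(2) of Definition~\ref{def-pair}, this yields a nontrivial $\reg(X)$-ideal $J$ with $J \cdot \Xi \subset \mathfrak{L}$. Flexibility now enters: I would choose finitely many LNDs $\Delta_1, \ldots, \Delta_N$ that span the tangent space on a Zariski-open subset of $X$, each of them a shear with constant coefficient and therefore an overshear, and bracket $J \cdot \Xi$ with them repeatedly to propagate the containment to $J' \cdot V \subset \mathfrak{L}$ for every algebraic vector field $V$ and some nontrivial ideal $J'$ depending on $V$. A standard covering argument, using that $\saut(X)$ acts transitively by flexibility and that the common zero locus of sufficiently many $\saut(X)$-translates of $J'$ is empty, then upgrades this to the full module of algebraic vector fields.

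The main obstacle, and the only genuinely new content compared with the argument of \cite{densitycriteria}, is the bookkeeping in the second step: one must verify that every intermediate vector field appearing in the chain of commutators is literally of the form $f \cdot D$ with $D$ an LND and $D^2(f) = 0$, rather than merely being a complete algebraic vector field. Because iterated brackets of overshears can produce coefficients that do not obviously satisfy the overshear condition, this requires choosing the auxiliary functions $p, q$ inside $\ker \Theta$ and $\ker \Xi$ with some care, and at each step re-expressing the bracket via the identity above to keep the resulting vector field in the overshear class. Once this verification is carried out, the remainder of the argument is structurally identical to the classical compatible-pair criterion.
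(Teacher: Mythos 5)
Your proposal follows essentially the same route as the paper: run the Kaliman--Kutzschebauch compatible-pair argument from \cite{densitycriteria} with generators that are shears or overshears of the LNDs $\Theta$ and $\Xi$, so as to place a nontrivial $\reg(X)$-submodule inside the Lie algebra $\mathfrak{L}$ generated by algebraic overshear fields --- the precise output of the bracket step is $pq\,\Theta(h)\,\Xi = [p\Theta, qh\Xi] - [ph\Theta, q\Xi]$ for $p \in \ker\Theta$, $q \in \ker\Xi$, so the submodule is $I\cdot\Theta(h)\cdot\Xi$ rather than $I\cdot\Xi$, a harmless correction since it is still nontrivial --- and then use flexibility (transitivity of $\saut(X)$ together with the $\slgrp(T_pX)$-isotropy statement of Arzhantsev et al.\ \cite{A-Z}) and the fact that pulling back an overshear of an LND by an algebraic automorphism stays in the same class, to spread this submodule over all of $X$ and all tangent directions. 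One conceptual correction: the ``main obstacle'' you single out is not actually there. The algebraic overshear density property only requires the \emph{generators} of $\mathfrak{L}$ to be overshears of LNDs; intermediate vector fields arising from iterated brackets need not themselves be of the form $f\cdot D$ with $D^2(f)=0$, since they lie in $\mathfrak{L}$ automatically by closure under Lie brackets. The only genuinely new checks relative to \cite{densitycriteria} are that the input fields $p\Theta$, $ph\Theta$, $q\Xi$, $qh\Xi$ are shears or overshears of LNDs (they are, e.g.\ $\Theta^2(ph)=p\,\Theta^2(h)=0$ and $\Xi(qh)=0$), and that the overshear class is preserved under pullback by elements of $\saut(X)$ (it is, since $\alpha^*D$ is again an LND and $(\alpha^*D)^2(\alpha^*f)=\alpha^*(D^2 f)$), which is exactly the point the paper's proof emphasizes.
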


\begin{proof} The proof follows the lines of proof  of Theorems 1 and 2 from \cite{densitycriteria}. The compatible pair creates a submodule
of the algebraic vector fields which is contained in the Lie algebra generated by algebraic overshear vector fields. Then
transitivity of $\saut(X)$ follows from flexibility, and the transitivity
on a tangent space is also an easy consequence of flexibility. In 
fact, in \cite{A-Z}*{Theorem 4.14 and Remark 4.16} even more is proven, namely that any element in $\slgrp (T_p X)$ is contained in the isotropy group of $\saut(X)$ at a point $p \in X$. Since the pull-back of an LND by an algebraic automorphism is again an LND, the above reasoning shows that the Lie algebra generated by algebraic overshear vector fields is equal to the Lie algebra of all algebraic vector fields on $X$.
\end{proof}
\begin{theorem}
Let $X$ be a complex affine manifold with the algebraic overshear density property.
Then the group generated by flows of algebraic overshear vector fields is dense in the identity component $\autid(X)$ in compact-open topology.
\end{theorem}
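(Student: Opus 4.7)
The plan is to proceed via the standard Andersén--Lempert machinery, adapted to the restricted generating set of algebraic overshear vector fields. First I would observe that the algebraic overshear density property implies the holomorphic density property: on an affine manifold the algebraic vector fields are dense in $\holo(X)$-vector fields in the compact-open topology, so the Lie algebra generated by the algebraic overshears, which by AOSD contains every algebraic vector field, is in fact dense in the Lie algebra of all holomorphic vector fields. In particular, every holomorphic vector field on $X$ is locally uniformly approximable by Lie combinations of algebraic overshear vector fields.

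Given $\phi \in \autid(X)$, I would pick a smooth isotopy $\phi_t$, $t \in [0,1]$, connecting the identity to $\phi$ within $\autid(X)$ and let $\Theta_t$ denote the associated (in general time-dependent) holomorphic vector field. Following the Andersén--Lempert theorem in the form of Forstneri\v{c}--Rosay \cite{ForRos-AL}, one partitions $[0,1]$ into many short subintervals on which $\Theta_t$ is essentially constant and approximates each short-time evolution of $\phi_t$ by the time-$\varepsilon$ flow of a complete holomorphic vector field that approximates $\Theta_{t}$ on a sufficiently large compact set. By the previous paragraph we may choose each such complete approximant to lie in the Lie algebra generated by algebraic overshear vector fields; by keeping track of enough terms we may even assume that it \emph{is} a finite Lie combination of such overshears.

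The next step is to replace the flow of a finite Lie combination of algebraic overshears by a composition of flows of the individual overshears. For this I would use the standard Trotter product formula
\begin{equation*}
\phi^{\Theta+\Xi}_{t} = \lim_{N\to\infty} \bigl( \phi^{\Theta}_{t/N} \circ \phi^{\Xi}_{t/N} \bigr)^{N}
\end{equation*}
together with the commutator approximation
\begin{equation*}
\phi^{[\Theta,\Xi]}_{t} = \lim_{N\to\infty} \bigl( \phi^{\Theta}_{\sqrt{t/N}} \circ \phi^{\Xi}_{\sqrt{t/N}} \circ \phi^{\Theta}_{-\sqrt{t/N}} \circ \phi^{\Xi}_{-\sqrt{t/N}} \bigr)^{N},
\end{equation*}
both converging uniformly on compact subsets. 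Inductively, the time-$1$ flow of any finite Lie combination of algebraic overshear vector fields is a compact-open limit of compositions of flows of algebraic overshears.

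The main obstacle is not any single step but the uniform bookkeeping needed to piece the approximations together, since the flows used to approximate one short-time evolution may displace a given compact set outside the domain on which the next approximation is valid. Handling this requires working on an exhaustion $K_n \Subset X$ and choosing the approximation quality at each stage to compensate for the worst-case expansion of the preceding flows, which is exactly the content of the Andersén--Lempert theorem of \cite{ForRos-AL} applied inside the density property established in the first paragraph. Our contribution beyond the classical proof is only to verify, via the Trotter and commutator identities above, that every building block may be chosen to be the flow of an algebraic overshear vector field, which yields the desired density of the group they generate inside $\autid(X)$.
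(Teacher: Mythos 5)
Your proposal is correct and follows essentially the same route as the paper, which simply cites the Anders\'en--Lempert theorem of Forstneri\v{c}--Rosay; you have just unpacked the standard argument (density of algebraic fields in holomorphic ones, subdivision of the isotopy, and the Trotter/commutator reduction to flows of the individual generators). No gaps worth noting beyond the usual glossed-over point, also implicit in the paper, that elements of $\autid(X)$ are joined to the identity by a $C^1$ isotopy.
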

\begin{proof}
This is a direct application of the Anders\'en--Lempert Theorem, which can be found implicitly in \cites{AL, ForRos-AL, ForRos-AL-err,density}. For $\CC^n, n \geq 2,$ the best explicit reference is \cite{Forstneric-book}*{Theorem 4.9.2} with $\Omega = \CC^n$. For the general case, the analogous theorem can be found explicitly in \cite{KaKuPresent}*{Theorem 2} where one chooses $\Omega = X$ and any $\Phi_1 \in \autid(X)$. Only the flows of those complete vector fields are used for the approximation that were needed to establish the density property. This is implicit in the proof of \cite{KaKuPresent}*{Theorem 2}.
\end{proof}

\begin{theorem}
Let $X$ be a complex affine surface with the algebraic overshear density property and let $Y$ be a complex manifold. If the holomorphic endomorphisms of $X$ and the holomorphic endormophisms of $Y$ are isomorphic as semi-groups, then $X$ and $Y$ are biholomorphic or anti-biholomorphic.
\end{theorem}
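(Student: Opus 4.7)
The plan is to reconstruct both the underlying set of $X$ and its complex structure (up to complex conjugation) from the abstract semigroup $\mathrm{End}(X)$ of holomorphic endomorphisms. First I would characterize constant endomorphisms semigroup-theoretically: a constant map $c_x \colon X \to \{x\}$ satisfies $c_x \circ f = c_x$ for every $f \in \mathrm{End}(X)$, and conversely any $a$ with $a \circ f = a$ for all $f$ must be constant, as the choice $f = c_y$ yields $a(y) = a(x)$ for all $x, y$. Hence the semigroup isomorphism $\Phi$ restricts to a bijection of left zeros, inducing a bijection $\phi \colon X \to Y$ via $\Phi(c_x) = c_{\phi(x)}$. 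Applying $\Phi$ to $f \circ c_x = c_{f(x)}$ yields the identity $\Phi(f) = \phi \circ f \circ \phi^{-1}$ for every $f$, and in particular $\phi$ conjugates $\authol(X)$ bijectively onto $\authol(Y)$.

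Next I would show that $\phi$ is a homeomorphism. By Proposition \ref{aosd-flex} and the algebraic overshear density property, around any $x_0 \in X$ one can choose two $\CC$-complete overshear vector fields $\Theta_1, \Theta_2$ whose values span $T_{x_0}X$; their joint flow $(t_1, t_2) \mapsto \varphi_1^{t_1} \circ \varphi_2^{t_2}(x_0)$ is a local biholomorphism from a neighborhood of $0 \in \CC^2$ onto a neighborhood of $x_0$. The conjugated flows $\psi_i^t := \phi \circ \varphi_i^t \circ \phi^{-1}$ satisfy $\psi_i^s \psi_i^t = \psi_i^{s+t}$ and each $\psi_i^t$ is a holomorphic automorphism of $Y$; the heart of this step is to promote this abstract $\CC$-indexed family to a continuous, hence holomorphic, one-parameter subgroup of $\authol(Y)$. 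Invoking Kaup's theorem that $\authol(Y)$ is a Lie group with Lie algebra consisting of the complete holomorphic vector fields, one then identifies each $\psi_i^t$ as the flow of some complete holomorphic vector field on $Y$. Consequently the $\phi$-image of the above chart is a holomorphic chart around $\phi(x_0)$, in which $\phi$ is the identity reparametrization, so $\phi$ is a local homeomorphism, and a Bochner--Montgomery argument upgrades it to a real-analytic diffeomorphism.

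The dichotomy is then read off at the tangent-space level. The differential $d\phi_{x_0} \colon T_{x_0}X \to T_{\phi(x_0)}Y$ is an $\mathbb{R}$-linear bijection of two-dimensional complex vector spaces intertwining the $\slgrp_2(\CC)$-isotropies of $\saut(X)$ at $x_0$ and of $\saut(Y)$ at $\phi(x_0)$, which by \cite{A-Z}*{Theorem 4.14 and Remark 4.16} are contained in the full isotropies. Continuous group automorphisms of $\slgrp_2(\CC)$ are inner, possibly composed with complex conjugation, and a Schur-type computation, writing $d\phi_{x_0}(v) = Cv + D\bar v$, then forces $d\phi_{x_0}$ to be $\CC$-linear in the first case and $\CC$-antilinear in the second. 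Connectedness of $X$ precludes the two cases from mixing on the nonempty open subsets where each occurs, so $\phi$ is globally biholomorphic or anti-biholomorphic.

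The main obstacle, I expect, is the step upgrading the abstract bijection $\phi$ to a continuous map, since the only data available at that stage is the purely algebraic condition $\Phi(f) = \phi f \phi^{-1}$. Here the algebraic character of overshears is decisive, since the explicit algebraic-in-space dependence of their flows provides a bridge between the abstract semigroup data and genuine topological information on $Y$. The surface hypothesis, in turn, enters crucially in the final paragraph via the classification of $\mathbb{R}$-linear intertwiners of the standard representation of $\slgrp_2(\CC)$ on $\CC^2$; extending the result to higher dimensions would require the corresponding analysis for the standard representation of $\slgrp_n(\CC)$.
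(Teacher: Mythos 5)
There is a genuine gap. The paper's proof is a two-line reduction: since the algebraic overshear density property implies flexibility (Proposition~\ref{aosd-flex}), $X$ carries a nontrivial LND, and a nontrivial orbit of the associated $\mathbb{G}_a$-action is a closed algebraic curve isomorphic to $\CC$ (orbits of unipotent actions on affine varieties are closed), hence a \emph{proper} holomorphic embedding $\CC \hookrightarrow X$. That properly embedded copy of $\CC$ is exactly the hypothesis needed to invoke the semigroup-rigidity theorem of Andrist \cite{endos}, which does all the analytic work. Your proposal instead tries to reprove that rigidity theorem from scratch, and the attempt breaks down precisely at the step you yourself flag as the main obstacle: upgrading the abstract bijection $\phi$ (obtained from the left-zero characterization of constants, which is correct and standard) to a continuous map. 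Your proposed mechanism --- promoting the conjugated flows $\psi_i^t$ to continuous one-parameter subgroups and invoking Kaup's theorem that $\authol(Y)$ is a Lie group --- is not available here: $Y$ is an arbitrary complex manifold whose endomorphism semigroup is isomorphic to that of $X$, and since $X$ has the density property, $\authol(X)$ (hence the transported automorphism group of $Y$) is infinite-dimensional, so no finite-dimensional Lie group structure or Bochner--Montgomery argument applies. No alternative bridge from the purely algebraic identity $\Phi(f)=\phi\circ f\circ\phi^{-1}$ to topological information is supplied, and the final tangent-space dichotomy presupposes the differentiability that was never established.

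A secondary point: your diagnosis of where the surface hypothesis enters is off. It is not needed for a classification of $\mathbb{R}$-linear intertwiners of the standard $\slgrp_2(\CC)$-representation; the remark following the theorem in the paper explains that for $\dim X\geq 3$ the density property alone already yields a proper embedding $\CC\to X$ (Andrist--Wold), so the statement is interesting precisely in dimension $2$, where the density property is not known to produce such an embedding but the algebraic overshear density property does, via the properness of LND flow curves. If you want to salvage your approach, the honest route is to cite \cite{endos} for the rigidity statement and supply only the verification of its hypothesis, which is the one-sentence observation about LND orbits above.
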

\begin{proof}
A general orbit of the $\CC^+$-action generated by an LND gives  an algebraic embedding of $\CC \hookrightarrow X$ which is automatically proper. The theorem then follows from a theorem of Andrist \cite{endos}*{Theorem 3.3}.
\end{proof}
\begin{remark}
For a manifold $X$ with the density property there exists a proper holomorphic immersion $\CC \to X$. If in addition $\dim X \geq 3$, then there exists a proper holomorphic embedding $\CC \to X$ (see Andrist and Wold \cite{embedRiemann}*{Theorem 5.2}) and the above theorem follows. If $\dim X = 2$, we do not know in general, whether a proper holomorphic embedding $\CC \to X$ exists.
\end{remark}

\section{(Algebraic) density versus Algebraic overshear density -- all known examples}
\label{examples}
We will go through the list of known examples of affine-algebraic manifolds with the density property, and indicate for each one of them whether or not they have the algebraic density property resp.\ the algebraic overshear density property. As a general feature one can see that the gap between algebraic density property and algebraic overshear density property as rather narrow, but existent.
\begin{enumerate}
\item 
A homogeneous space $X=G/H$ where $G$ is a linear algebraic group and $H$ is a closed algebraic subgroup such that $X$ is affine and whose connected components are different from $\CC$ and from $(\CC^*)^n, n\geq 1,$ has algebraic density property. 

\medskip
It is known that if $H$ is reductive, then the space $X=G/H$ is always affine. However, there is no known group-theoretic criterion that would characterize when $G/H$ is affine.
The above result has a long history, it  includes all examples known from the work of Anders\'en--Lempert, Varolin \cite{density}, Varolin--Toth \cite{ToVa}, Kaliman--Kutzschebauch \cite{densitycriteria}, and Donzelli--Dvorsky--Kaliman \cite{DoDvKa2010}. The final result has been obtained by Kaliman--Kutzschebauch in \cite{densityhomog}.
The one-di\-men\-sion\-al manifolds $\CC$ and $\CC^*$ do not have the density property, however the following problem is well known and seems notoriously difficult:

\medskip
\textbf{Open Problem:} Does $(\CC^*)^n, n\ge 2,$ have the density property? 

\smallskip
It is conjectured that the answer is no, more precisely one expects that
all holomorphic automorphisms of $(\CC^*)^n, n\ge 2$ preserve the form $\wedge_{i=1}^n \frac{ {\rm d} z_i}{  z_i}$ up to sign.
\end{enumerate}

We are able to characterize those homogeneous spaces $X$ from above which have the algebraic overshear density property.
\begin{theorem}
\label{thm-homog}
Let $X = G/H$ be an affine homogeneous space of a linear algebraic group $G < \glgrp_n(\CC)$ by a closed algebraic subgroup $H$. Then $X$ has the algebraic overshear density property if and only if all algebraic morphisms $X \to \CC^\ast$ are trivial.
\end{theorem}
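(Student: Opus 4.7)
My plan is to establish the two implications separately, with the sufficiency direction reducing to Proposition \ref{comppairs}.

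\emph{Necessity.} First I would dispose of the easy direction: suppose $\varphi \colon X \to \CC^*$ is a non-constant algebraic morphism, so $\varphi \in \reg(X)^*$ is a non-constant unit. I would invoke the standard fact that the kernel of a locally nilpotent derivation on an affine variety is factorially closed and in particular contains every unit; hence $\Theta(\varphi) = 0$ for every $\Theta \in \shvecalg(X)$. Every algebraic overshear $f \cdot \Theta$ therefore annihilates $\varphi$, and since the algebraic vector fields $V$ with $V(\varphi) = 0$ form a Lie subalgebra, the whole Lie algebra generated by algebraic overshears lies inside this subalgebra. Because $X$ is smooth affine and $\varphi$ is non-constant, this subalgebra is proper, so AOSD fails.

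\emph{Sufficiency.} For the converse, assume $\reg(X)^* = \CC^*$. I would apply Proposition \ref{comppairs}, which requires two ingredients: flexibility of $X$, and the existence of a compatible pair of LNDs on $X$. The construction of a compatible pair on an affine homogeneous space (excluding tori $(\CC^*)^n$, which are ruled out by our hypothesis) is already carried out inside the Kaliman--Kutzschebauch proof of the algebraic density property for such spaces \cite{densityhomog}, so (ii) comes for free. For (i), I would decompose $G = R_u(G) \rtimes L$ with $L$ reductive and further split $L$ into its central torus $Z(L)^0$ and semisimple part $[L,L]$. The fundamental vector fields on $X$ associated to nilpotent elements of $\mathrm{Lie}(G)$ --- namely those coming from $\mathrm{Lie}(R_u(G))$ and from the nilpotent cone of $\mathrm{Lie}([L,L])$ --- are LNDs. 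The only directions at a base point $x_0 \in X$ not covered by these LNDs could come from $Z(L)^0$, and any such uncovered direction would correspond to a non-trivial character of $G$ that is trivial on the isotropy $H$, yielding a non-trivial algebraic morphism $X \to \CC^*$. The hypothesis excludes this, so the LNDs span $T_{x_0} X$, and $G$-homogeneity propagates flexibility to every point.

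\emph{Main obstacle.} The delicate step will be the precise translation between algebraic morphisms $X \to \CC^*$ and characters of $G$ that are trivial on $H$, which is what converts the global unit-freeness hypothesis into the infinitesimal statement that nilpotent elements of $\mathrm{Lie}(G)$ exhaust $T_{x_0} X$. Once this correspondence is established, assembling the spanning at $x_0$ into flexibility and invoking Proposition \ref{comppairs} with the compatible pair from \cite{densityhomog} should be routine.
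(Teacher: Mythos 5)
Your necessity argument is correct and, if anything, more carefully justified than the one in the paper (which argues via flow curves of LNDs being tangent to the fibres of $\varphi$; your route via factorial closedness of $\ker\Theta$ and the observation that $\{V : V(\varphi)=0\}$ is a proper Lie subalgebra amounts to the same thing). Your flexibility argument in the sufficiency direction is also a sensible sketch: fundamental vector fields of unipotent one-parameter subgroups are LNDs, the nilpotent elements span $\mathrm{Lie}(R_u(G))+[\mathfrak{l},\mathfrak{l}]$, and an uncovered direction at the base point produces a character of $G$ killing $H$ (after passing to a power to deal with $H/H^0$), hence a non-trivial morphism $X\to\CC^\ast$.

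The gap is in your step ``(ii) comes for free.'' Proposition \ref{comppairs} requires a compatible pair of LNDs in the strict sense of Definition \ref{def-pair}, i.e.\ \emph{both} fields locally nilpotent, and such pairs simply do not exist on all affine homogeneous spaces with trivial character group. A concrete counterexample is $X=\slgrp_2(\CC)/T$ with $T$ a maximal torus: this is the smooth Danielewski surface $\{bc=1-a^2\}$, it has only constant invertible regular functions, so it falls under the theorem, yet the paper itself records (item 2(a) of Section \ref{examples}, citing Proposition 2.9 of \cite{densityalgvol}) that such surfaces admit \emph{no} compatible pairs of LNDs. The compatible pairs constructed in \cite{densityhomog} belong to a broader notion in which one member may be a semisimple (not locally nilpotent) complete field, so they cannot be fed into Proposition \ref{comppairs}; and whether compatible pairs of LNDs exist even on higher-dimensional AOSD manifolds is posed as an open problem later in this very paper. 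The actual proof sidesteps this entirely: it re-examines the Kaliman--Kutzschebauch proof of the algebraic density property \cite{densityhomog}*{Theorem 11.7} and checks that, when the torus $T_1$ of \cite{densityhomog}*{Notation 4.10} is trivial --- which is exactly what the triviality of all morphisms $X\to\CC^\ast$ guarantees, via $X\cong Z\times T_1$ --- that proof only ever uses LNDs and their algebraic overshears. If you want to salvage your reduction, you would need either to restrict to those homogeneous spaces where a genuine LND compatible pair is available, or to prove a version of Proposition \ref{comppairs} with the weaker Kaliman--Kutzschebauch notion of compatibility, neither of which is routine.
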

\begin{proof}
Since the algebraic maps from $\CC$ to $\CC^\star$ are constant, the flow curves of
an LND have to be tangent to the fibers of any algebraic morphism $X \to \CC^\ast$.
This proves the necessity part. To see the sufficiency of the condition,
we follow the proof of the algebraic density property for $X$ of Kaliman and Kutzschebauch in \cite{densityhomog}*{Theorem~11.7}. That proof needs only LNDs and their algebraic overshears if (and only if) the torus $T_1$ introduced in \cite{densityhomog}*{Notation~4.10, p.~1319} is trivial, which in turn follows from the triviality of all algebraic morphisms $X \to \CC^\ast$, and by \cite{densityhomog}*{Proposition~5.2.} $X$ is isomorphic to $Z \times T_1$ where $Z$ is an affine flexible variety by \cite{densityhomog}*{Lemma 4.11}. Thus, by Proposition 3.3 it suffices to establish existence of a compatible pair of LNDs on $Z$. There are two cases, and in the first one such existence follows from \cite{densityhomog}*{Proposition 7.1}. In the second one, there is a construction of compatible pairs in the proof of \cite{densityhomog}*{Corollary 8.2} where the assumptions of the latter corollary are checked for $Z$ in the proof of \cite{densityhomog}*{Theorem 10.6}.
\end{proof}

\begin{examples}[for Theorem \ref{thm-homog}] \hfill
\begin{itemize}
\item The motivating example is of course $\CC^n$, $n \geq 2$, where it is sufficient to take algebraic overshears of $\frac{\partial}{\partial z_1}, \dots, \frac{\partial}{\partial z_n}$ to establish the algebraic overshear density property as it was shown by Anders\'en and Lempert \cite{AL}.
\item $\glgrp_n(\CC), n \geq 2,$ has the algebraic density property, but not the algebraic overshear density property, since $\det \colon \glgrp_n(\CC) \to \CC^\ast$ is non-trivial.
\item $\slgrp_n(\CC), n \geq 2,$ has the algebraic overshear density property.
\end{itemize}
\end{examples}

We continue our list of all known examples for the algebraic overshear density property.

\begin{enumerate}
\setcounter{enumi}{1}
\item
The manifolds  $X$ given as a submanifold in $\CC^{n+2}$ with coordinates $u\in \CC$, $v\in \CC$, $z\in \CC^n$  by the equation $uv = p(z)$, where the zero fiber of the polynomial $p \in \CC[\CC^n]$ is smooth (otherwise $X$ is not smooth)
 have algebraic density property and thus density property \cite{KKhyper}.
We claim that all these examples have algebraic overshear density property:
\begin{enumerate}
\item For $n=1$ this is the main result of \cite{Danielewski}*{Section 3}. Note that for $n=1$ this is also an example of a complex manifold with density property that does not admit compatible pairs (see Definition~\ref{def-pair}) which follows from
Proposition~2.9 in \cite{densityalgvol}.

\item For $n \geq 2$ it is an easy exercise to show that the group $\saut(X)$ acts transitively on $X$ and that the pairs of LNDs $(\theta_i, \theta_j), i \neq j$ form compatible pairs where 
$\theta_i := \frac{\partial p}{ \partial z_i} \frac{\partial}{\partial u} - u \frac{\partial}{\partial z_i}$. The result now follows from Proposition \ref{comppairs}.
\end{enumerate}

\item
\label{ex-giz}
Before formulating the next result, recall that \emph{Gizatullin surfaces} are by definition the normal affine surfaces on which the algebraic automorphism group acts with an open orbit whose complement is a finite set of points. By the classical result of Gizatullin, they can be characterized by admitting a completion with a simple normal crossing chain of rational curves at infinity. Every Gizatullin surface admits a $\CC$-fibration with at most one singular fibre which is however not always reduced.

Smooth Gizatullin surfaces which admit such a $\CC$-fibration that the singular fibre is reduced (sometimes called \emph{generalized Danielewski surfaces}) have the density property \cite{Gizatullindens}. We do not know except for the case of Danielewski surfaces whether 
they have the algebraic overshear density property, since in the proof of the density property one uses LNDs pulled back by certain holomorphic (non-algebraic) automorphisms arising as flow maps of algebraic vector fields. These pullbacks are not necessarily algebraic vector fields.

\item
Certain algebraic hypersurfaces in $\CC^{n+3}$:
\[
\{ (x, y, z_0, z_1, \dots, z_n) \in \CC \times \CC \times \CC^{n+1} \,:\, x^2 y = a(z) + x \cdot b(z) \}
\]
where $\deg_{z_0} a \leq 2$, $\deg_{z_0} b \leq 1$ and not both degrees are zero. This class includes the Koras--Russell cubic threefold and has the density property according to Leuenberger \cite{KorasRussell}.

The Koras--Russell cubic $\{ x + x^2 y + u^2 + v^3 = 0 \} \subset \CC^4_{x,y,u,v}$ which is famous for being diffeomorphic to $\RR^6$, but being not algebraically isomorphic to $\CC^3$ cannot be flexible and, hence, does not have the algebraic overshear density property, since all LNDs necessarily fix the $x$-coordinate axis, see Makar-Limanov \cite{MakarLimanov}. We do not know whether those examples of Leuenberger have algebraic density property by the same reason as in the class of examples \ref{ex-giz} above.

\item The Calogero--Moser spaces $\mathcal{C}_n$ have the algebraic overshear density property. An inspection of the proof in \cite{MR4305975} shows that only overshears are needed: The density property follows from the existence of a non-degenerate algebraic $\mathrm{SL}_2(\CC)$ action and from flexibility. Hence, all the involved vector fields are overshears of LNDs.
\end{enumerate}

\clearpage

Recently, Ugolini and Winkelmann proved that if $X$ is a complex affine manifold that is flexible, and if $E \to X$ is an algebraic vector bundle over $X$, then $E$ is flexible as well \cite{UgoWin}*{Theorem 1.6}. Moreover, they proved that if $X$ is a complex affine manifold with the density property, and if $E$ is algebraic and flexible, then $E$ has the density property  \cite{UgoWin}*{Corollary 1.5}. Thus, if $X$ has the algebraic overshear density property, then it follows from their results that the total space $E$ of an algebraic vector bundle $E \to X$ has the density property. However, their proof requires as an essential ingredient the Euler vector field which is a $\CC^\ast$-action and not an LND. This leads naturally to the  question, whether the total space admits the overshear density property. Indeed, this is true by our next result.

\begin{theorem}
\label{thm:bundles}
Let $X$ be a flexible complex affine manifold. Let $E \to X$ be an algebraic vector bundle over $X$. Then $E$ has the algebraic overshear density property.
\end{theorem}

%We need the following theorem of Rosenlicht (cited after Springer \cite{Springer}*{Proposition 14.2.2}).

%\begin{theorem}
%\label{thm:Rosenlicht}
%Let $(\CC,+)$ act non-trivially on an irreducible affine algebraic variety $X$. Then there exists an affine algebraic variety $Y$ such that
%\begin{enumerate}5
%\item There exists an isomorphism $\Phi \colon \CC \times Y \to \Omega \subset X$ where $\Omega$ is a non-empty Zariski-open subset.
%\item There exists a morphism $\Psi \colon \CC \times Y \to \CC$ with
%\[
%\psi(a + b, y) = \psi(a, y) + \psi(b, y) \text{ and }
%a \cdot \phi(b, y) = \phi(\psi(a, y) + b, y)
%\]
%for all $a, b \in \CC$ and $y \in Y$.
%\end{enumerate}
%\end{theorem}

%\begin{remark}
%For a LND $\Theta \neq 0$ on $X$, this implies that $\Phi^\ast \Theta_{|\Omega} = \frac{\partial}{\partial z}$ where $z \in \CC$ is the coordinate of the first factor. Moreover, $\Omega$ is $\Theta$-invariant.
%\end{remark}
Before we start with the proof let us remind some
general construction for an LND $\Theta$ acting on an affine-algebraic variety $X$.
\begin{remark}
\label{Rosenlicht}
We quote the following from \cite{A-Z}*{Remarks~2.7}: 
According to \cite{partquot}*{Theorem~3.3} the field of rational invariants is the quotient field of the ring $\CC[X]^\Theta$ of regular invariants for an affine-algebraic variety $X$ for an LND $\Theta$.
Hence by a corollary of the Rosenlicht theorem on rational invariants (see \cite{partquot}*{Proposition~3.4}) the regular invariants $\CC[X]^\Theta$ separate orbits on an $\Theta$-invariant open dense subset $U(\Theta)$ of $X$. Shrinking $U(\Theta)$ if necessary we can
achieve that $U(\Theta)$ has a geometric quotient $U(\Theta)/\Theta$, which admits a locally closed embedding into some $\CC^N$ by regular invariants in $\CC[X]^\Theta$ (see also \cite{partquot}*{Theorem 4.4}).
\end{remark}

\begin{proof}[Proof of Theorem \ref{thm:bundles}]
We choose a non-trivial LND $\Theta$ on $X$ which exists due to flexibility. We now apply the Remark \ref{Rosenlicht} above to the $(\CC, +)$-action induced by $\Theta$ and obtain an $\Theta$-invariant open dense subset $U(\Theta)$ of $X$. Also by the same Rosenlicht theorem (see e.g.\ Springer \cite{Springer}*{Proposition 14.2.2}) we have an isomorphism $\Phi \colon \CC_z \times Y \to U(\Theta)$ where $Y$ is an affine-algebraic variety $Y$, and $\Phi^\ast(\Theta) = \frac{\partial}{\partial z}$.

Moreover, the algebraic vector bundle $\pi \colon E \to X$ is trivial over a non-empty Zariski-open subset $V \subset X$.
In particular, the bundle is trivial over $U(\Theta) \cap V$. On $\CC \times Y$, we can extend the trivialization from $\Phi^{-1}(U(\Theta) \cap V)$ to $\CC \times Y'$ for some non-empty Zariski-open subset $Y' \subset Y$ using the action of $\Phi^\ast(\Theta)$ and the fact that an algebraic vector bundle is invariant under the flow of of an LND of the base space (see e.g.\ \cite{UgoWin}*{Theorem 11.1}).

We set $W := \Phi(\CC \times Y')$. 
Since $W \subset X$ is a $\Theta$-invariant Zariski-open subset of the affine variety $U(\Theta)$, there exists a non-zero regular function $f \colon X \to \CC$ with $W \subset \{f \neq 0\}$ that is $\Theta$-invariant: In fact, we can find a non-zero function which is a polynomial in the finitely many regular invariants of $\CC[X]^\Theta$ that embed $Y \cong U(\Theta)/\Theta$ as a locally closed subset into $\CC^N$.
 
Let $r \geq 1$ be the rank of $E$. A large affine part of $E$ is now given by $W \times \CC^r \cong (\CC_z \times Y') \times \CC^r_w$.
By $\hat{\Phi}$ we also denote the extension of the isomorphism $\Phi$ to the trivial bundle, $\hat{\Phi} \colon (\CC_z \times Y') \times \CC^r_w \to W \times \CC^r$. 
Let $\Xi := \hat{\Phi}_\ast \frac{\partial}{\partial w_1}$. Then $\widetilde{\Theta} := (f \circ \pi) \cdot \hat{\Phi}_\ast \Phi^\ast \Theta$ and $\widetilde{\Xi} := (f \circ \pi) \cdot \Xi$ are well defined algebraic vector fields on $E$ and are in fact LNDs. 
On $ (\CC_z \times Y') \times \CC^r_w$, the vector fields $\frac{\partial}{\partial z}$ and $\frac{\partial}{\partial w_1}$ form a compatible pair, since the span of the product of their kernels contains in fact all regular functions. Hence, $\widetilde \Theta$ and $\widetilde \Xi$ form a compatible pair on $E$, since the span of the product of their kernels contains the ideal generated by $f \circ \pi$.

Moreover, we have flexibility of the total space $E$ by \cite{UgoWin}*{Theorem 11.4}, and hence the result follows from Proposition \ref{comppairs}.
\end{proof}

\clearpage

\section{Embedding bordered Riemann surfaces}
\label{emb}

We generalize the following result from $\CC^2$ to complex-affine surfaces with the algebraic overshear density property. 

\begin{theorem}
\label{thmembedding}
Let $X$ be a complex affine surface with the algebraic overshear density property and let $R \subset X$ be an embedded, smoothly  bordered Riemann surface with finitely many boundary components and non-empty boundary.  Then there exists a proper holomorphic embedding of the Riemann surface $R$ into $X$.
\end{theorem}

\begin{remark} The exact assumption in the Theorem is that $R^\circ$ is the interior of a closed domain with boundary of class $\mathcal{C}^r$, $r \geq 2$, in a compact (not necessarily connected) Riemann surface $\hat R$.  We may assume that the boundary consists of real analytic Jordan curves, and $R$ has no connected component without boundary, see Stout \cite{Stout}*{Theorem 8.1}. Note that any conformal diffeomorphism of $R$ onto such a domain is in $\mathcal{C}^1(\hat{R})$; see \cite{AFL-book}*{Theorem 1.10.10}. By \emph{embeddded}, we mean that there exists an injective and immersive $\mathcal{C}^1$-map $f \colon R \to X$ that is 
holomorphic in the interior $R^\circ$ of $R$. By an application of the Mergelyan theorem as in
the proof of Corollary 1.2 in \cite{borderedproper} we may assume that $f$ is in fact holomorphic in an open neighborhood $U$ of $R$ in $\hat{R}$. The corresponding form of the manifold-valued Mergelyan Theorem can be found in \cite{legacy}*{Corollary 8} on p.~178.
\end{remark}

\begin{remark}
\label{trivialGizatullin}
A complex-affine surface $X$ with the algebraic overshear density property has trivial Makar-Limanov invariant, since it is flexible (\cite{A-Z}*{Proposition 5.1}). For a normal complex-affine surface non-isomorphic to $\CC \times \CC^\ast$ and $\CC^\ast \times \CC^\ast$, this is one of the equivalent conditions implying that $X$ is a Gizatullin surface (see Bandman and Makar-Limanov \cite{MR1872757} in the smooth case, and Dubouloz \cite{MR2069802} in the normal case). Together, this implies that our $X$ above is a homogeneous Gizatullin surface. We will however not make use of this fact in the following proof.
We do not know whether the result of Theorem \ref{thmembedding} extends to all homogeneous Gizatullin surfaces. Our proof uses the density property which is not established for all homogeneous Gizatullin surfaces. The first author proved that Gizatullin surfaces with reduced degenerate fibre have the density property \cite{Gizatullindens} which does not cover all homogeneous Gizatullin surfaces.
\end{remark}

Among the ingredients, we will need Zariski's finiteness theorem:
\begin{theorem}[\cite{Zariski}, cited after \cite{Freudenburg}*{Section~6.3}]
For a field $k$, let $A$ be an affine normal $k$-domain, and let $K$ be a subfield of $\mathrm{frac}(A)$ containing $k$.
If $\mathrm{tr.deg.}_k K \leq 2$, then $K \cap A$ is finitely generated over $k$.
\end{theorem}

\begin{corollary}
Let $X$ be a complex-affine surface, and let $\Theta \in \shvecalg{X}$. Then the GIT quotient $X /\!/ \CC^+_\Theta$ is affine.
\end{corollary}

Any non-trivial LND $\Theta$ admits a \emph{local slice}, i.e., in our context a polynomial function $f \colon X \to \CC$, such that $\Theta(f) \neq 0$ but $\Theta^2(f) = 0$.
If the GIT quotient $X /\!/ \CC^+_\Theta$ is affine, then there exists a Zariski-open subset $U$ in the quotient s.t.\ the corresponding fibration over $U$ is trivial and each such fibre is isomorphic to $\CC$, see \cite{Freudenburg}*{Principle 11, p.~27} and the comment \cite{Freudenburg}*{p.~34}.

\bigskip

The proof of the main theorem of this section uses the idea of a ``pre-composition with a shear'' from the original proof of Forstneri\v{c} and Wold in the case of $X = \CC^2$ \cite{borderedproper}. It has been generalized to $X = \CC \times \CC^\ast$ by Ritter and L{\'a}russon \cite{MR3233212} and to $X = (\CC^\ast)^2$ by Ritter \cite{MR3881472}. We give a short outline of their proof and indicate the necessary adjustments to our situation.

\begin{proof}
According to Proposition \ref{aosd-flex}, we can choose two LNDs $\Theta_1$ and $\Theta_2$ on $X$ that are not proportional.
Denote by $\pi_k \colon X \to X /\!/ \CC^+_{\Theta_k}$ the projection to the GIT quotient of $\Theta_k$ for $k=1,2$. Note the following facts:
\begin{enumerate}
\item $X /\!/ \CC^+$ is a smooth affine curve, since $X$ is a smooth surface, and thus  the quotient is normal which excludes singularities. In addition, the existence of a second non-proportional locally nilpotent derivation gives
a non-constant map of $\CC$ (any generic orbit of this second LND) into
$X /\!/ \CC^+$ and thus $X /\!/ \CC^+$ is isomorphic to $\CC$.

\item The general fibres of $\pi_k$ are isomorphic to $\CC$ with only finitely many special fibres.
\end{enumerate}

In the first step of the proof, we expose points w.r.t.\ $\pi_2$.
We indicate the necessary adjustments to the method introduced by Forstneri\v{c} and Wold \cite{borderedproper}*{Section 4}.

\begin{definition}
A point $p \in \partial R$ is called \emph{exposed} w.r.t.\ $\pi_k \; (k = 1, 2)$ if 
\[
\pi_k^{-1} (\pi_k (p)) \cap \overline{R} = \{p\}
\]
Moreover, $F_p:=\pi_k^{-1} (\pi_k (p))$ and the boundary $\partial R$ intersect transversely, in fact $T_p F_p \cap T_p \partial R =\{0\}$.
\end{definition}

Following the procedure in \cite{borderedproper}*{Section 4} we change the embedding of $R$ into $X$ so that in each of the finitely many boundary curves $(\partial R)_i$ there exists an exposed point $p_i$
and such that each of the exposed points lies in a general fibre of $\pi_2$. The proof goes exactly as the proof of Theorem 4.2 in \cite{borderedproper}. The conclusion that $p_i$ is in a general fibre can be easily achieved by adding this assumption to choice of the points $p_i$ (same notation in the proof of Theorem 4.2) before choosing the arcs $\lambda_i$ in that proof (see also Figure 2 in \cite{borderedproper}). The Mergelyan Theorem for maps to  $\CC^2$ (functions)
should be replaced by the (Oka) manifold-valued Mergelyan Theorem proved by Forstneri\v{c} in \cite{Mergelyan} which can be found in \cite{legacy}*{Corollary 8}. 
By the definition of an exposed point we then have  $\pi_2(p_i) \neq \pi_2(p_j)$ for $p_i \neq p_j$.

%Indeed, since $X$ has the density property, we can use Anders\'en--Lempert theory to achieve the above properties by proceeding as in the case of $\CC^2$. 

%The projections $\pi_2(p_i)$ will be contained in some disk $\Delta$.

In the second step, we follow the procedure in \cite{borderedproper}*{Section 5} which in turn relies on Wold \cite{woldembed}*{Section 4}, in particular Lemma 1 therein. We consider $\pi_1$ restricted to $\pi_2^{-1}\{ \mathrm{q} \}$ for a point $q \in X /\!/ \CC^+_{\Theta_2} \cong \CC$ where in fact we will choose $q = \pi_2(p_i)$. Since this fibre is isomorphic to $\CC$ as well, we can find algebraic coordinates and write $\pi_1 | \pi_2^{-1}\{ \mathrm{q} \}$ as a polynomial map $\CC \to \CC$. Choosing a sufficiently small Euclidean neighborhood $D \subset \CC$ around $q$ such that all fibers above points in $D$ are general, we can write 
\[
\pi_1(z,w) = a_n(w) \cdot z^n + a_{n-1}(w) \cdot z^{n-1} + \dots + a_1(w) \cdot z + a_0(w)
\]
where $w \in D$ and $z \in \CC$ is in the fibre above $w$ w.r.t.\ $\pi_2(z,w) = w$. The coefficients $a_i \in \holo (D)$ are holomorphic functions.
Note that this map can't be of degree $0$ in $z$ since the LNDs $\Theta_1$, $\Theta_2$ are not proportional.

Let $\varphi_2(x,t)$ with time $t \in \CC$ and $x \in X$ be the (complete) flow of $\Theta_2$.

Consider the map
\[
x \mapsto \Phi(x) := \varphi_2 \left(x, \sum_{i} \frac{\alpha_i}{\pi_2(x) - \pi_2(p_i)} \right) 
\]
with $\alpha_i \in \CC \setminus \{0\}$.
When restricted to $R$, this sends precisely the exposed points to infinity. The choices of the finitely many constants $\alpha_i$  have to be made such that the $\pi_1$-projections of the paths given by ``the opened boundary components'' $u_i \colon \RR \to \Phi ((\partial R)_i) $ are pairwise disjoint outside a compact. Note that these projections lie in the quotient isomorphic to $\CC$. Hence, we can proceed with the choice of constants $\alpha_i$ as in the original proof for $\CC^2$ where the projection was onto a coordinate axis; we just need to notice the following:

Without loss of generality, we may assume $q = 0$ in suitable coordinates for $D$. We locally (around the exposed point) parametrize the curve $(\partial R)_i$  through the exposed point by $\gamma_i = (\gamma_i^1,\gamma_i^2)$ and assume $\gamma_i^1(0) = q = 0$. Let $\gamma_i^1(t) = a_i^1 t + a_i^2 t^2 + \dots$ be the Taylor expansion in $t$ centered at $0$. For small enough $D$ we can assume that
\[
\sum_{i} \frac{\alpha_i}{\pi_2(x) - \pi_2(p_i)} = \frac{\alpha_{i}}{w} + O(1)
\]
We then obtain for  $t \to 0$ that
\[
\pi_1(\Phi(\gamma_i(t))) = a_n(\gamma_i^2(t)) \cdot (\gamma_i^1(t))^n + \dots = \left(\frac{\alpha_i}{a_i^1}\right)^n \frac{1}{t^n} + \dots
\]
Our calculations show that the assumptions of \cite{woldembed}*{Lemma 1} are satisfied; In particular, for $t$ close to zero, we obtain that each projection $\pi_1(\Phi(\gamma_i(t)))$ is $\mathcal{C}^1$-close to the union of two  different straight rays  $L_1$, $L_2$ going to infinity in opposite directions (one ray for $t\to 0^+$ and another ray for $t\to 0^-$). 
As said above by choosing the coefficients $\alpha_i$ accordingly, we achieve that all those rays for all the exposed points are different. Thus, outside a big disc $\Delta_R \subset X /\!/ \CC^+_{\Theta_1} \cong \CC$, the images $\pi_1(\Phi(\gamma_i(t)))$ of the opened boundary curves together with $\Delta_R$ form a ``Mergelyan set'', i.e., the complement has no bounded components. We are not using  \cite{woldembed}*{Lemma 1} as it stands, only the strategy of its proof. The necessary adjustments for our case are indicated in the proof of Lemma \ref{lem-inductive} below.

In the third step, we consider now the ``opened'' boundary curves  $u_i \colon \RR \to X$. Our aim is to push them to infinity by applying inductively a sequence of automorphisms of $X$ while keeping the interior of $R$ in the domain of convergence of this sequence.

Since $X$ is a Stein manifold, we can exhaust it by $\holo(X)$-convex compacts $\{X_m\}_{m \in \NN}$ such that $X_m \subset X_{m+1}^\circ$ and $\cup_{m \in \NN} X_m = X$. Similarly, we exhaust $R$ by compacts $R_m$.

%By $E_1, \dots, E_r$ we denote the finitely many exceptional fibres of $\pi_1$.

We will apply the following lemma inductively:

\begin{lemma}
\label{lem-inductive}
There exists a holomorphic automorphism $\alpha \colon X \to X$ such that
for any given $\holo(X)$-convex compacts $K \subset L \subset X$ and any $\varepsilon > 0$,
%and given point $x_0 \in X$ that is not in an exceptional fiber,
we obtain that
\begin{enumerate}
\item $\| \alpha - \id \|_K < \varepsilon$ (w.r.t.\ any fixed embedding of $X$ into some $\CC^N$)
\item $\alpha \circ u_i(\RR) \cap L = \emptyset$ for every opened boundary curve $u_i \colon \RR \to X$.
%\item $\alpha \circ u_i (\RR) \cap E_\ell = \emptyset$ for every opened boundary curve $u_i \colon \RR \to X$ and every exceptional fiber $E_\ell$.
%\item In a coordinate neighborhood of $x_0$ given by $(\pi_1, \pi_2)$ the Jacobian of $\alpha$ is equal to $1$.
\end{enumerate}
\end{lemma}

\begin{proof}
We obtain $\alpha$ in the form  $\alpha = f \circ s$ where $s$ is a shear map. This is the so called ``precomposition with a shear'' trick invented by Buzzard and Forstneri\v c in \cite{BF}. First, the automorphism $f$ is   constructed using Anders\'en--Lempert theory as in  \cite{woldembed}*{Lemma 1} to expell a big but still compact part of the opened boundary curve from the compact $L$ in $X$. Note that the proof does not make use of the linear structure of $\CC^2$. Also, their application of Stolzenberg's result \cite{Stolzenberg} on the holomorphic convexity of the union of a holomorphically convex compact and finitely many smooth real arcs  attached to it, is justified in our situation, since it holds in any Stein manifold instead of $\CC^2$, see for example  the points (1) and (2) in the proof of \cite{borderedproper}*{Theorem  5.1}.

In the second step, the precomposing shear map $s$ is constructed as the time-$1$ map of a holomorphic shear $h \Theta_1$: Here, the holomorphic function $h$ which is in the kernel of the LND $\Theta_1$, is obtained by using Mergelyan's theorem. It is used to ensure that no parts from ``far away'' in the opened boundary curves can come back to the compact set $L\subset X$ (recall that Anders\'en-Lempert theory gives only approximation on compacts).  Indeed, we verified in the second step above that the assumptions of \cite{woldembed}*{Lemma 1} are satisfied, which allows the use of Mergelyan's theorem. Note that the corresponding GIT quotient is isomorphic to $\CC$ and the set $\CC \setminus (\Delta_R \cup \pi_1 (u_i))$ has no bounded components. Moreover, we choose $R$ big enough so that the finitely many image points of the exceptional fibers $E_1, \dots, E_r$ of $\pi_1$ are contained in $\Delta_R$. Hence, we can apply Mergelyan's theorem in one dimension. 
%Moreover, $\pi_1 (u_i)$ is not going through the finitely many points corresponding to the finitely many exceptional fibres of $\pi_1$, so $s$ can move all points of $u_i$.
%For details, see Ritter \cite{MR3881472}*{p.~71}.

%By transversality, we can further assume that for the automorphism $f$ constructed using Anders\'en--Lempert theory, we have that $f \circ u_i(\RR) \cap E_\ell = \emptyset$ for all opened boundary curves $u_i$ (real dimension $1$) and all exceptional fibres $E_\ell$ (real dimension $2$). Since $s$ is the flow of a shear of the LND $\theta_1$, it does not change the $\pi_1$-projection of a point and this implies our condition $\alpha \circ u_i (\RR) \cap E_\ell = \emptyset$. 
\end{proof}

The final embedding of $R$ is now obtained by applying Lemma \ref{lem-inductive} for each $K = X_m \cup (\alpha_{m-1} \circ \dots \circ \alpha_1)(R_{j(m)})$, $j(m)\ge m$ and $L = X_{m'}$, inductively in $m \in \NN$ as in \cite{borderedproper}*{Theorem 5.1, p.~111}. Here, $j(m)  \in \NN$ is chosen big enough such that $K$ is polynomially convex, see point (2) in the proof of \cite{borderedproper}*{Theorem 5.1, p.~111}, and where $m' \geq m$ is chosen such that $X_{m'} \supset K$ in each step.

For each $m \in \NN$ we obtain an automorphism $\alpha_m \colon X \to X$ such that $\| \alpha_m - \id \|_{X_m \cup (\alpha_{m-1} \circ \dots \circ \alpha_1)(R_{m(j)})} < 2^{-m}$ as well as $\alpha_m \circ (\alpha_{m-1} \circ \dots \circ \alpha_1 \circ u_i(\RR)) \cap X_{m'} = \emptyset$ for every opened boundary curve $u_i$. %and $\alpha_m \circ (\alpha_{m-1} \circ \dots \circ \alpha_1 \circ \gamma_j(\RR)) \cap E_\ell = \emptyset$ for every opened boundary curve $\gamma_i$ and every exceptional fiber $E_\ell$.

Then the limit $g := \lim_{m \to \infty} \alpha_m \circ \alpha_{m-1} \circ \dots \circ \alpha_1$ converges uniformly on compacts of a Fatou--Bieberbach domain $\Omega := \bigcup_{m \in \NN} (\alpha_m \circ \alpha_{m-1} \circ \dots \circ \alpha_1)^{-1}(X_m)$ and defines a biholomorphic map $g \colon \Omega \to X$ (see Prop. 4.1.1. from  \cite{Forstneric-book}. This domain does not contain the opened boundary curves since they are pushed out of the compact $X_{m'}$ in step $m$ (thus going to infinity). However, $R$ is inside the domain of convergence, since the maps $\alpha_m$ are closer and closer to identity on the exhausting compact subsets $R_{j(m)}\supset R_m$. Thus the restriction $g\vert_R \colon R \to X$ is a proper holomorphic embedding of the open Riemann surface $R$ into $X$. \qedhere

%To see that $g$ is injective, consider its Jacobian: $g$ is either injective or degenerate everywhere. However, the degenerate case can be easily excluded by the condition on the Jacobian in $x_0$.
\end{proof}

\clearpage

\section{Open problems}

Recall that we denote the class of flexible manifolds by $\mathrm{FLEX}$ and the class of manifolds with the algebraic overshear density property by $\mathrm{AOSD}$. For the following problem, let us moreover denote the class of manifolds with the algebraic density property by $\mathrm{AD}$.
We know that $\mathrm{AD} \setminus \mathrm{FLEX} \neq \emptyset$, for e.g.\ $\CC \times \CC^\ast$ is in this set, see \cite{A-Z} for a larger list of examples.
By Proposition \ref{aosd-flex} we know that $\mathrm{AD} \cap \mathrm{FLEX} \subseteq  \mathrm{AOSD}$. However, we do not know whether this inclusion is strict. This motivates the following problem.

\begin{problem} \hfill
\label{problemFLEXAD}
\begin{enumerate}
    \item Is $\mathrm{FLEX} \setminus \mathrm{AD} \neq \emptyset$ among affine-algebraic manifolds?
    \item Is $(\mathrm{AD} \cap \mathrm{FLEX}) \setminus \mathrm{AOSD} \neq \emptyset$?
\end{enumerate}{}

\end{problem}

%Of course one could first try to find the solution to
%
%\begin{problem}
%Which complex-affine surfaces do have the algebraic overshear density property? Of course they need to be smooth Gizatullin surfaces.
%\end{problem}

\begin{definition}
Let $X$ be a complex manifold and $\tau \in \aut{X}$. We call $\tau$ a \emph{generalized translation} if for any $\holo(X)$-convex compact $K \subsetneq X$ there exists an $m \in \NN$ such that for the iterate $\tau^m$ it holds that 
\begin{enumerate}
\item $\tau^m(K) \cap K = \emptyset$ and
\item $\tau^m(K) \cup K$ is $\holo(X)$-convex.
\end{enumerate}
\end{definition}

\begin{problem}
Let $X$ be a complex-affine manifold with the algebraic overshear density property. Does there exist a generalized translation \cite{hypercyclic}*{Def.~1.4} on $X$? This would imply that there exist two automorphisms that generate a dense subset of the identity component of the automorphism group $\autid(X)$.
\end{problem}

\begin{problem}
Let $X$ be a complex-affine manifold with the algebraic overshear density property and with $\dim X \geq 3$. Does there exist a compatible pair of LNDs on $X$?
\end{problem}

The result of Anders\'en and Lempert, answering the question of Rosay and Rudin mentioned in the introduction, says that the group generated by overshears is meagre in $\authol(\CC^n)$. 
However, they use only shears and overshears from formulas \eqref{shear} and
\eqref{overshear}
which we would like to call \emph{coordinate shears} and \emph{coordinate overshears}. Our definition is more general. If one uses all LNDs instead of only coordinate shears, the question becomes more tricky. It is for example not clear, whether the group generated by flows of all overshears for all LNDs can be generated by the overshears for a countable number of LNDs. For $\CC^2$, the group of algebraic automorphisms can be generated
by all shears along the coordinate directions. Hence, it is clear that the problem has a positive answer for $\CC^2$. In $\CC^3$, the algebraic coordinate shears are not sufficient to generate the group of algebraic automorphisms by the result of Shestakov and Umirbaev. However, it is conjectured that a single algebraic automorphism of $\CC^n$ together with the affine group generates the algebraic automorphism group of $\CC^n$ (for $n\ge 4$ see
for example \cite{Kanel} Problem (3) in the last section). 

If this is true, it would most likely give a solution to our next problem in case $X= \CC^n$.
In this context we ask the following question.

\begin{problem}
Let $X$ be a complex-affine manifold with the algebraic overshear density property.
Is the group generated by flows of holomorphic overshear vector fields meagre in $\autid{X}$?
\end{problem}

For a partial derivative $\Theta = \frac{\partial}{\partial z_j}$ in $\CC^n$, it is immediate that $\ker \Theta$ is a ring of functions in $n-1$ variables.
An important ingredient in the above-mentioned proof of Anders{\'e}n and Lempert is to have a good growth estimate (in terms of $d$) for the vector space dimension of $\ker \Theta \cap \{ f \in \CC[z_1, \dots, z_n] \,:\, \deg f \leq d \}$ where $\deg$ is the total degree of $f$. This naturally leads to the following more general question for LNDs.

\begin{problem}
Let $X$ be a smooth affine variety over a field $k$ admitting a $k^\ast$-action. Let $\bigoplus_{j=0}^{\infty} B_j = k[X]$ be the grading induced by this $k^\ast$-action, and denote by $\deg$ the corresponding degree function. Let $\Theta$ be a locally nilpotent derivation on $k[X]$. Does the following hold?
\[
\lim_{d \to \infty} \frac{\dim \{ f \in \ker \Theta \,:\, \deg f \leq d \}}{\dim \{ f \in k[X] \,:\, \deg f \leq d \}} = 0
\]
\end{problem}

In \cite{straight} Kaliman shows that an algebraic isomorphism of two affine subvarietes $X, Y \subset \CC^n$ extends to a holomorphic automorphism of $\CC^n$ if $n > \max(2 \dim X, \dim T X)$. The key ingredient in his proof is to use overshears of a suitable LND. More recently, an improved theorem was also given by Kaliman \cite{MR4098881}.

\begin{problem}
Let $Z$ be an affine-algebraic manifold with the algebraic overshear density property.
Let $X$ and $Y$ be algebraically isomorphic subvarieties of $Z$. Does every algebraic isomorphism $X \to Y$ extend to a holomorphic automorphism $Z \to Z$?
\end{problem}

\begin{problem} Can all  results about embeddings  of open Riemann surfaces into $\CC^2$ from the recent article of Alarc{\'o}n and Forstneri\v{c} \cite{alarconforstneric} be generalized to embeddings into complex-affine surfaces with the algebraic overshear density property?
    
\end{problem}
\section*{Acknowledgements}
The first author would like to thank the Mathematical Institute at the University of Bern and the second author would like to thank the Center of Advanced Mathematical Sciences at the American University of Beirut for hosting them. 

The authors would like to thank the referee for the careful review and their help to improve the exposition.

\section*{Grants}
The research of first author was supported by the European Union (ERC Advanced grant HPDR, 101053085 to Franc Forstneri\v{c}) and grant N1-0237 from ARRS, Republic of Slovenia. The research of the second author was partially supported by Schweizerische Nationalfonds Grant 200021-178730.

\begin{bibdiv}
\begin{biblist}

\bib{AFL-book}{book}{
   author={Alarc\'{o}n, Antonio},
   author={Forstneri\v{c}, Franc},
   author={L\'{o}pez, Francisco J.},
   title={Minimal surfaces from a complex analytic viewpoint},
   series={Springer Monographs in Mathematics},
   publisher={Springer, Cham},
   date={2021},
   pages={xiii+430},
   isbn={978-3-030-69056-4},
   isbn={978-3-030-69055-7},
   review={\MR{4237295}},
   doi={10.1007/978-3-030-69056-4},
}

\bib{alarconforstneric}{article}{
   author={Alarc{\'o}n, Antonio},
   author={Forstneri\v{c}, Franc},
   title={Embedded complex curves in the affine plane},
   eprint={ArXiv:2301.10304},
   date={2023},
}

\bib{AL}{article}{
   author={Anders{\'e}n, Erik},
   author={Lempert, L{\'a}szl{\'o}},
   title={On the group of holomorphic automorphisms of ${\bf C}^n$},
   journal={Invent. Math.},
   volume={110},
   date={1992},
   number={2},
   pages={371--388},
   issn={0020-9910},
%   review={\MR{1185588}},
   doi={10.1007/BF01231337},
}

\bib{endos}{article}{
   author={Andrist, Rafael B.},
   title={Stein spaces characterized by their endomorphisms},
   journal={Trans. Amer. Math. Soc.},
   volume={363},
   date={2011},
   number={5},
   pages={2341--2355},
   issn={0002-9947},
   %review={\MR{2763719}},
   doi={10.1090/S0002-9947-2010-05104-9},
}

\bib{embedRiemann}{article}{
   author={Andrist, Rafael B.},
   author={Wold, Erlend Forn{\ae}ss},
   title={Riemann surfaces in Stein manifolds with the density property},
   language={English, with English and French summaries},
   journal={Ann. Inst. Fourier (Grenoble)},
   volume={64},
   date={2014},
   number={2},
   pages={681--697},
   issn={0373-0956},
   %review={\MR{3330919}},
}

\bib{hypercyclic}{article}{
   author={Andrist, Rafael B.},
   author={Wold, Erlend Forn{\ae}ss},
   title={Free dense subgroups of holomorphic automorphisms},
   journal={Math. Z.},
   volume={280},
   date={2015},
   number={1-2},
   pages={335--346},
   issn={0025-5874},
   doi={10.1007/s00209-015-1425-8},
}

\bib{fibred}{article}{
   author={Andrist, Rafael B.},
   author={Kutzschebauch, Frank},
   title={The fibred density property and the automorphism group of the
   spectral ball},
   journal={Math. Ann.},
   volume={370},
   date={2018},
   number={1-2},
   pages={917--936},
   issn={0025-5831},
   %review={\MR{3747506}},
   doi={10.1007/s00208-017-1520-8},
}

\bib{Gizatullindens}{article}{
   author={Andrist, Rafael B.},
   title={The density property for Gizatullin surfaces with reduced
   degenerate fibre},
   journal={J. Geom. Anal.},
   volume={28},
   date={2018},
   number={3},
   pages={2522--2538},
   issn={1050-6926},
   doi={10.1007/s12220-017-9916-y},
}

\bib{MR4305975}{article}{
   author={Andrist, Rafael B.},
   title={The density property for Calogero-Moser spaces},
   journal={Proc. Amer. Math. Soc.},
   volume={149},
   date={2021},
   number={10},
   pages={4207--4218},
   issn={0002-9939},
   %review={\MR{4305975}},
   doi={10.1090/proc/15457},
}

\bib{A-Z}{article}{
   author={Arzhantsev, I.},
   author={Flenner, H.},
   author={Kaliman, S.},
   author={Kutzschebauch, F.},
   author={Zaidenberg, M.},
   title={Flexible varieties and automorphism groups},
   journal={Duke Math. J.},
   volume={162},
   date={2013},
   number={4},
   pages={767--823},
   issn={0012-7094},
   %review={\MR{3039680}},
   doi={10.1215/00127094-2080132},
}

\bib{MR1872757}{article}{
   author={Bandman, T.},
   author={Makar-Limanov, L.},
   title={Affine surfaces with ${\rm AK}(S)=\mathbb{C}$},
   journal={Michigan Math. J.},
   volume={49},
   date={2001},
   number={3},
   pages={567--582},
   issn={0026-2285},
   %review={\MR{1872757}},
   doi={10.1307/mmj/1012409971},
}

\bib{BF}{article}{
    author = {Buzzard, Gregery T.},
    author = {Forstneri{\v c}, Franc},
     TITLE = {A Carleman type theorem for proper holomorphic embeddings},
   JOURNAL = {Ark. Mat.},
    VOLUME = {35},
      YEAR = {1997},
    NUMBER = {1},
     PAGES = {157--169},
      ISSN = {0004-2080},
}

\bib{DK1}{article}{
    author = {Derksen, Harm},
    author = {Kutzschebauch, Frank},
     TITLE = {Nonlinearizable holomorphic group actions},
   JOURNAL = {Math. Ann.},
    VOLUME = {311},
      YEAR = {1998},
    NUMBER = {1},
     PAGES = {41--53},
      ISSN = {0025-5831},
}

\bib{DoDvKa2010}{article}{
   author={Donzelli, F.},
   author={Dvorsky, A.},
   author={Kaliman, S.},
   title={Algebraic density property of homogeneous spaces},
   journal={Transform. Groups},
   volume={15},
   date={2010},
   number={3},
   pages={551--576},
   issn={1083-4362},
%   review={\MR{2718937 (2012a:14136)}},
   doi={10.1007/s00031-010-9091-8},
}

\bib{MR2069802}{article}{
   author={Dubouloz, Adrien},
   title={Completions of normal affine surfaces with a trivial Makar-Limanov
   invariant},
   journal={Michigan Math. J.},
   volume={52},
   date={2004},
   number={2},
   pages={289--308},
   issn={0026-2285},
   %review={\MR{2069802}},
   doi={10.1307/mmj/1091112077},
}

\bib{ForRos-AL}{article}{
   author={Forstneri{\v{c}}, Franc},
   author={Rosay, Jean-Pierre},
   title={Approximation of biholomorphic mappings by automorphisms of ${\bf
   C}^n$},
   journal={Invent. Math.},
   volume={112},
   date={1993},
   number={2},
   pages={323--349},
   issn={0020-9910},
   doi={10.1007/BF01232438},
}

\bib{ForRos-AL-err}{article}{
   author={Forstneri{\v{c}}, Franc},
   author={Rosay, Jean-Pierre},
   title={Erratum: ``Approximation of biholomorphic mappings by
   automorphisms of $\mathbf C^n$'' [Invent.\ Math.\ {\bf 112} (1993), no.
   2, 323--349;  MR1213106 (94f:32032)]},
   journal={Invent. Math.},
   volume={118},
   date={1994},
   number={3},
   pages={573--574},
   issn={0020-9910},
   doi={10.1007/BF01231544},
}

\bib{legacy}{article}{
    AUTHOR = {Forn{\ae}ss, John Erik},
    author = {Forstneri\v{c}, Franc},
    author = {Wold, Erlend F.},
     TITLE = {Holomorphic approximation: the legacy of {W}eierstrass,
              {R}unge, {O}ka-{W}eil, and {M}ergelyan},
% BOOKTITLE = {Advancements in complex analysis---from theory to practice},
     PAGES = {133--192},
% PUBLISHER = {Springer, Cham},
      date = {2020},
%   MRCLASS = {30E10 (30E20 32A26 32E30)},
 % MRNUMBER = {4264040},
%MRREVIEWER = {Juan Carlos Fari\~{n}a Gil},
       DOI = {10.1007/978-3-030-40120-7\_5},
       URL = {https://doi.org/10.1007/978-3-030-40120-7_5},
}

\bib{borderedproper}{article}{
   author={Forstneri\v{c}, Franc},
   author={Wold, Erlend Forn\ae ss},
   title={Bordered Riemann surfaces in $\mathbb C^2$},
   language={English, with English and French summaries},
   journal={J. Math. Pures Appl. (9)},
   volume={91},
   date={2009},
   number={1},
   pages={100--114},
   issn={0021-7824},
   %review={\MR{2487902}},
   doi={10.1016/j.matpur.2008.09.010},
}

\bib{Mergelyan}{article}{
    AUTHOR = {Forstneri\v{c}, Franc},
     TITLE = {Mergelyan's and {A}rakelian's theorems for manifold-valued
              maps},
   JOURNAL = {Mosc. Math. J.},
%  FJOURNAL = {Moscow Mathematical Journal},
    VOLUME = {19},
      YEAR = {2019},
    NUMBER = {3},
     PAGES = {465--484},
      ISSN = {1609-3321,1609-4514},
       DOI = {10.17323/1609-4514-2019-19-3-465-484},
}

\bib{Forstneric-book}{book}{
   author={Forstneri\v{c}, Franc},
   title={Stein manifolds and holomorphic mappings},
   series={Ergebnisse der Mathematik und ihrer Grenzgebiete. 3. Folge. A
   Series of Modern Surveys in Mathematics [Results in Mathematics and
   Related Areas. 3rd Series. A Series of Modern Surveys in Mathematics]},
   volume={56},
   edition={2},
   note={The homotopy principle in complex analysis},
   publisher={Springer, Cham},
   date={2017},
   pages={xiv+562},
   isbn={978-3-319-61057-3},
   isbn={978-3-319-61058-0},
   %review={\MR{3700709}},
   doi={10.1007/978-3-319-61058-0},
}

\bib{Freudenburg}{book}{
   author={Freudenburg, Gene},
   title={Algebraic theory of locally nilpotent derivations},
   series={Encyclopaedia of Mathematical Sciences},
   volume={136},
   edition={2},
   note={Invariant Theory and Algebraic Transformation Groups, VII},
   publisher={Springer-Verlag, Berlin},
   date={2017},
   pages={xxii+319},
   isbn={978-3-662-55348-0},
   isbn={978-3-662-55350-3},
   %review={\MR{3700208}},
   doi={10.1007/978-3-662-55350-3},
}

\bib{Kanel}{article}{
   author={Kanel-Belov, Alexei},
   author={Yu, Jie-Tai},
   author={Elishev, Andrey},
   title={On the augmentation topology of automorphism groups of affine
   spaces and algebras},
   journal={Internat. J. Algebra Comput.},
   volume={28},
   date={2018},
   number={8},
   pages={1449--1485},
   issn={0218-1967},
   %review={\MR{3899220}},
   doi={10.1142/S0218196718400040},
}

\bib{straight}{article}{
   author={Kaliman, S.},
   title={Analytic extensions of algebraic isomorphisms},
   journal={Proc. Amer. Math. Soc.},
   volume={143},
   date={2015},
   number={11},
   pages={4571--4581},
   issn={0002-9939},
   %review={\MR{3391018}},
   doi={10.1090/proc/12684},
}
		
\bib{densitycriteria}{article}{
   author={Kaliman, Shulim},
   author={Kutzschebauch, Frank},
   title={Criteria for the density property of complex manifolds},
   journal={Invent. Math.},
   volume={172},
   date={2008},
   number={1},
   pages={71--87},
   issn={0020-9910},
   %review={\MR{2385667}},
   doi={10.1007/s00222-007-0094-6},
}

\bib{KKhyper}{article}{
   author={Kaliman, Shulim},
   author={Kutzschebauch, Frank},
   title={Density property for hypersurfaces $UV=P(\overline X)$},
   journal={Math. Z.},
   volume={258},
   date={2008},
   number={1},
   pages={115--131},
   issn={0025-5874},
   %review={\MR{2350038}},
   doi={10.1007/s00209-007-0162-z},
}

\bib{KaKuPresent}{article}{
    AUTHOR = {Kaliman, Shulim},
    author = {Kutzschebauch, Frank},
     TITLE = {On the present state of the Anders\'en-Lempert theory},
 BOOKTITLE = {Affine algebraic geometry},
    SERIES = {CRM Proc. Lecture Notes},
    VOLUME = {54},
     PAGES = {85--122},
 PUBLISHER = {Amer. Math. Soc., Providence, RI},
      YEAR = {2011},
}

\bib{densityalgvol}{article}{
   author={Kaliman, Shulim},
   author={Kutzschebauch, Frank},
   title={On algebraic volume density property},
   journal={Trans. Groups},
   volume={21},
   number={2},
   date={2016},
   pages={451-478},
   doi={10.1007/s00031-015-9360-7},
}

\bib{densityhomog}{article}{
   author={Kaliman, Shulim},
   author={Kutzschebauch, Frank},
   title={Algebraic (volume) density property for affine homogeneous spaces},
   journal={Math. Ann.},
   volume={367},
   date={2017},
   number={3-4},
   pages={1311--1332},
   issn={0025-5831},
   %review={\MR{3623226}},
   doi={10.1007/s00208-016-1451-9},
}

\bib{Danielewski}{article}{
   author={Kutzschebauch, Frank},
   author={Lind, Andreas},
   title={Holomorphic automorphisms of Danielewski surfaces I---density of
   the group of overshears},
   journal={Proc. Amer. Math. Soc.},
   volume={139},
   date={2011},
   number={11},
   pages={3915--3927},
   issn={0002-9939},
   doi={10.1090/S0002-9939-2011-10855-4},
}

\bib{MR4098881}{article}{
   author={Kaliman, Shulim},
   title={Extensions of isomorphisms of subvarieties in flexible varieties},
   journal={Transform. Groups},
   volume={25},
   date={2020},
   number={2},
   pages={517--575},
   issn={1083-4362},
   %review={\MR{4098881}},
   doi={10.1007/s00031-019-09546-3},
}

\bib{MR4083242}{article}{
   author={Kaliman, Shulim},
   author={Kutzschebauch, Frank},
   author={Leuenberger, Matthias},
   title={Complete algebraic vector fields on affine surfaces},
   journal={Internat. J. Math.},
   volume={31},
   date={2020},
   number={3},
   pages={2050018, 50},
   issn={0129-167X},
   %review={\MR{4083242}},
   doi={10.1142/S0129167X20500184},
}

\bib{Kutzschebauch-Low-Wold}{article}{
    AUTHOR = {Kutzschebauch, Frank},
    author = {L{\o}w, Erik},
    author = {Wold, Erlend Forn{\ae}ss},
     TITLE = {Embedding some {R}iemann surfaces into {$\mathbb{C}^2$} with interpolation},
   JOURNAL = {Math. Z.},
    VOLUME = {262},
      YEAR = {2009},
    NUMBER = {3},
     PAGES = {603--611},
      ISSN = {0025-5874},
       DOI = {10.1007/s00209-008-0392-8},
}

\bib{flexi}{article}{
    AUTHOR = {Kutzschebauch, Frank},
     TITLE = {Flexibility properties in complex analysis and affine
              algebraic geometry},
 BOOKTITLE = {Automorphisms in birational and affine geometry},
    SERIES = {Springer Proc. Math. Stat.},
    VOLUME = {79},
     PAGES = {387--405},
 PUBLISHER = {Springer, Cham},
      YEAR = {2014},
       DOI = {10.1007/978-3-319-05681-4\textsubscript22},
}

\bib{KorasRussell}{article}{
   author={Leuenberger, Matthias},
   title={(Volume) density property of a family of complex manifolds
   including the Koras-Russell cubic threefold},
   journal={Proc. Amer. Math. Soc.},
   volume={144},
   date={2016},
   number={9},
   pages={3887--3902},
   issn={0002-9939},
   %review={\MR{3513546}},
   doi={10.1090/proc/13030},
}

\bib{MakarLimanov}{article}{
   author={Makar-Limanov, L.},
   title={On the hypersurface $x+x^2y+z^2+t^3=0$ in ${\bf C}^4$
   or a ${\bf C}^3$-like threefold which is not ${\bf C}^3$},
   journal={Israel J. Math.},
   volume={96},
   date={1996},
   number={part B},
   part={part B},
   pages={419--429},
   issn={0021-2172},
   %review={\MR{1433698}},
   doi={10.1007/BF02937314},
}

\bib{RRmaps}{article}{
   author={Rosay, Jean-Pierre},
   author={Rudin, Walter},
   title={Holomorphic maps from ${\bf C}^n$ to ${\bf C}^n$},
   journal={Trans. Amer. Math. Soc.},
   volume={310},
   date={1988},
   number={1},
   pages={47--86},
   issn={0002-9947},
   doi={10.2307/2001110},
}

\bib{MR3881472}{article}{
   author={Ritter, Tyson},
   title={A soft Oka principle for proper holomorphic embeddings of open
   Riemann surfaces into $(\mathbb{C}^*)^2$},
   journal={J. Reine Angew. Math.},
   volume={745},
   date={2018},
   pages={59--82},
   issn={0075-4102},
   %review={\MR{3881472}},
   doi={10.1515/crelle-2015-0116},
}

\bib{MR3233212}{article}{
   author={L\'{a}russon, Finnur},
   author={Ritter, Tyson},
   title={Proper holomorphic immersions in homotopy classes of maps from
   finitely connected planar domains into $\mathbb{C} \times \mathbb{C}^*$},
   journal={Indiana Univ. Math. J.},
   volume={63},
   date={2014},
   number={2},
   pages={367--383},
   issn={0022-2518},
   %review={\MR{3233212}},
   doi={10.1512/iumj.2014.63.5206},
}

\bib{US}{article}{
    AUTHOR = {Shestakov, Ivan P.},
    author = {Umirbaev, Ualbai U.},
     TITLE = {The {N}agata automorphism is wild},
   JOURNAL = {Proc. Natl. Acad. Sci. USA},
    VOLUME = {100},
      YEAR = {2003},
    NUMBER = {22},
     PAGES = {12561--12563},
      ISSN = {1091-6490},
}

\bib{Springer}{book}{
   author={Springer, T. A.},
   title={Linear algebraic groups},
   series={Progress in Mathematics},
   volume={9},
   edition={2},
   publisher={Birkh\"{a}user Boston, Inc., Boston, MA},
   date={1998},
   pages={xiv+334},
   isbn={0-8176-4021-5},
   review={\MR{1642713}},
   doi={10.1007/978-0-8176-4840-4},
}

\bib{Stolzenberg}{article}{
   author={Stolzenberg, Gabriel},
   title={Uniform approximation on smooth curves},
   journal={Acta Math.},
   volume={115},
   date={1966},
   pages={185--198},
   issn={0001-5962},
   review={\MR{0192080}},
   doi={10.1007/BF02392207},
}

\bib{Stout}{article}{
   author={Stout, E. L.},
   title={Bounded holomorphic functions on finite Riemann surfaces},
   journal={Trans. Amer. Math. Soc.},
   volume={120},
   date={1965},
   pages={255--285},
   issn={0002-9947},
   review={\MR{0183882}},
   doi={10.2307/1994021},
}

\bib{ToVa}{article}{
   author={Toth, Arpad},
   author={Varolin, Dror},
   title={Holomorphic diffeomorphisms of complex semisimple Lie groups},
   journal={Invent. Math.},
   volume={139},
   date={2000},
   number={2},
   pages={351--369},
   issn={0020-9910},
   %review={\MR{1738449}},
   doi={10.1007/s002229900029},
}

\bib{UgoWin}{article}{
   author={Ugolini, Riccardo},
   author={Winkelmann, J{\"o}rg},
   title={The Density Property for Vector Bundles},
   date={2022},
   eprint={arXiv:2209.05763},
}

\bib{shears}{article}{
   author={Varolin, Dror},
   title={A general notion of shears, and applications},
   journal={Michigan Math. J.},
   volume={46},
   date={1999},
   number={3},
   pages={533--553},
   issn={0026-2285},
   %review={\MR{1721579}},
   doi={10.1307/mmj/1030132478},
}

\bib{density}{article}{
   author={Varolin, Dror},
   title={The density property for complex manifolds and geometric
   structures},
   journal={J. Geom. Anal.},
   volume={11},
   date={2001},
   number={1},
   pages={135--160},
   issn={1050-6926},
   %review={\MR{1829353}},
   doi={10.1007/BF02921959},
}

%\bib{MR1785520}{article}{
%   author={Varolin, Dror},
%   title={The density property for complex manifolds and geometric
%   structures. II},
%   journal={Internat. J. Math.},
%   volume={11},
%   date={2000},
%   number={6},
%   pages={837--847},
%   issn={0129-167X},
%   %review={\MR{1785520}},
%   doi={10.1142/S0129167X00000404},
%}

\bib{partquot}{article}{
   author={Vinberg, \`E. B.},
   author={Popov, V. L.},
   title={Invariant theory},
   language={Russian},
   conference={
      title={Algebraic geometry,~4 (Russian)},
   },
   book={
      series={Itogi Nauki i Tekhniki},
      publisher={Akad. Nauk SSSR, Vsesoyuz. Inst. Nauchn. i Tekhn. Inform.,
   Moscow},
   },
   date={1989},
   pages={137--314, 315},
}

\bib{Winkelmann-invariant}{article}{
   author={Winkelmann, J\"{o}rg},
   title={Invariant rings and quasiaffine quotients},
   journal={Math. Z.},
   volume={244},
   date={2003},
   number={1},
   pages={163--174},
   issn={0025-5874},
   review={\MR{1981881}},
   doi={10.1007/s00209-002-0484-9},
}

\bib{woldembed}{article}{
 Author = {Wold, Erlend Forn{\ae}ss},
 Title = {Proper holomorphic embeddings of finitely and some infinitely connected subsets of {{\(\mathbb C\)}} into {{\(\mathbb C^2\)}}},
 Journal = {Math. Z.},
 Volume = {252},
 Number = {1},
 Pages = {1--9},
 Year = {2006},
 DOI = {10.1007/s00209-005-0836-3},
}

\bib{Zariski}{article}{
   author={Zariski, O.},
   title={Interpr\'{e}tations alg\'{e}brico-g\'{e}om\'{e}triques du quatorzi\`eme probl\`eme de
   Hilbert},
   language={French},
   journal={Bull. Sci. Math. (2)},
   volume={78},
   date={1954},
   pages={155--168},
   issn={0007-4497},
   review={\MR{65217}},
}
	
\end{biblist}
\end{bibdiv}

\end{document}